\theoremstyle{plain}
\newtheorem{thm}{Theorem}
\newtheorem{lem}[thm]{Lemma}
\newtheorem{rem}{Remark}
\newtheorem{cor}{Corollary}
\theoremstyle{definition}
\newtheorem{defn}{Definition}[section]
\theoremstyle{remark}
\newcommand{\norm}[1]{\Arrowvert #1 \Arrowvert}
\newcommand{\R}{\mathbb{T}}
\newcommand{\Rd}{\mathbb{T}^d}
\newcommand{\PP}{\mathbb{P}}
\newcommand{\N}{\mathbb{N}}
\begin{document}
	\title{Quantitative particle approximation of nonlinear stochastic  Fokker-Planck equations with singular kernel}
	\author{Josu\'e Knorst, Christian Olivera and Alexandre B. de Souza}
	\date{}

	\maketitle

	\begin{abstract}
		We derive quantitative estimates for large stochastic systems of interacting particles perturbed by both idiosyncratic and environmental noises, as well as singular kernels. We prove that the (mollified) empirical process converges  to the solution of the nonlinear stochastic Fokker-Planck equation. The proof is based on  It\^o's formula for  $H_{q}^{1}$-valued process, commutator estimates,  and some estimations for the 
		regularization of the empirical measure. Moreover, we show that the aforementioned equation admits a unique strong solution
in the probabilistic sense. The approach applies to repulsive and attractive kernels.

	\end{abstract}

	\vspace{0.3cm} \noindent {\bf MSC2010 subject classification:} 49N90,  60H30, 60K35.

	\section{Introduction}

	The aim of this article is to derive a quantitative convergence result  for the following  
	nonlinear stochastic Fokker-Planck equation

	\begin{equation}\label{SPDE_Ito}
		\mathrm{d}\rho_t = \frac{1}{2} \sum_{i, j=1}^d \partial_{i j} \left(\rho_t \sum_{k=1}^{d}\left(\nu_t^{ik}\nu_t^{jk} +\sigma_t^{i k} \sigma_t^{j k}\right)\right)\, dt  - \nabla \cdot  \big(\rho_t F(\cdot\,,  K\ast \rho_t)\big)\, d  t - \nabla \rho_t \cdot \sigma_t\, d  B_t 
	\end{equation}

	\noindent from a stochastic moderately interacting particle system. The equation (\ref{SPDE_Ito})  is considered for arbitrary dimensions $d \geq 1$.
	We present a method that allows us to derive a fairly general class of such systems as limit dynamics of interacting stochastic many-particle systems. This procedure has been methodically employed in the literature, from the seminal papers  \cite{Oelschlager84, Oelschlager85,  Oelschlager87},  (see also  \cite{JourdainMeleard} and \cite{Meleard}) to the recent works \cite{FlandoliOliveraSimon} and \cite{Pisa}. Concerning motivations for the model with transport noise, let us mention model reduction, see \cite{Maj}, in addition to other motivations like  \cite{Flanlect} and  \cite{Holm}.
	Our results  cover several classical models such as the stochastic  $2d$ Navier-Stokes equation, which in vorticity form can be written as in \eqref{SPDE_Ito} with the Biot-Savart kernel,  the stochastic Burgers 
	equation  and the parabolic-elliptic Keller-Segel PDE in any dimension $d\geq 1$, which models the phenomenon of chemotaxis.

	The derivation of macroscopic models from interacting particle systems is a fascinating and active research topic in mathematics.
	The basic idea is that, as the number of particles increases to infinity, the macroscopic model can effectively describe the universal properties of particles. In this article, we derive the stochastic PDE  \label{SPDE_Ito} from the stochastic 
	moderately interacting particle system on 
	$\mathbb{T}^d$ given by 
\begin{align}\label{particles}
d  X_t^{i,N} &= F \left(X_t^{i,N}, \frac{1}{N}\sum_{k=1}^{N} \left(K \ast V^{N}\right)\left(X_{t}^{i,N} -X_{t}^{k,N}\right)\right) \, dt +  \nu_t\left(X_t^{i,N}\right)\,d W_t^{i}\nonumber\\
&+ \sigma_t\left(X^{i,N}_t\right)\, dB_t  \quad 
\end{align}
	
	\noindent where $W_{t}^{i}$ and $B_{t}$ are  independent standard $\mathbb{T}^d$-valued Brownian motions, defined on a filtered probability space $(\Omega,\mathcal{F},(\mathcal{F}_t)_{t\geq0},\mathbb{P})$, the interaction kernel $V^{N} $ depends on the number of particles  $N \in \mathbb{N} $ via the moderate interaction parameter $\beta$. Systems of the form (\ref{particles}) are commonly employed in the field of mean-field games, see \cite{Carmona}, \cite{Carmona2}, \cite{Live} and \cite{Lac}.

	The microscopic \emph{empirical process} of this $N$-particle system, which is a probability measure on the ambient space $\mathbb{T}^d$,  is given as usual by 
	\begin{equation} S_{t}^{N}:=\frac{1}{N} \sum_{i=1}^{N}\delta_{X_{t}^{i,N}}, \qquad t\geqslant 0, \label{eq:mutN}\end{equation} where $\delta_{a}$ is the delta Dirac measure concentrated at $a \in \R^d$. Then, $(S_{t}^{N})_{t\geqslant 0}$ is a  measure--valued process associated to the $\R^d$--valued processes $\{t\mapsto X_{t}^{i,N}\}_{i=1,...,N}$. 
	
	The first contribution of  this paper is to investigate the large $N$ limit of  the dynamical process
	$(S_{t}^{N})_{t\geqslant 0}$ in the common noise setting with singular kernels. For that purpose, we introduce the \emph{mollified empirical measure} \[\rho_t^{N}:= V^{N} \ast S_t^{N} = \int_{\R^d} V^N(\cdot-y) \, S_t^N(\mathrm{d} y),\] 
	which is more regular than $S_t^N$, where  $V^N = N^\beta V(N^{\frac{\beta}{d}} \cdot)$, $\beta \in (0,1)$.
	We quantify the distance between the empirical measure (and the regularized version) and the unique solution 
	to  the SPDE  (\ref{SPDE_Ito}).  The proof  is  based  on     Itô's formula for $H_{q}^{1}$-valued processes, commutator  estimates,  and  stochastic  analysis techniques.\\  
	
	The second  contribution is to show  the well-posedness of the nonlinear stochastic Fokker–Planck equation (\ref{SPDE_Ito}). The main challenge in proving existence and uniqueness results for
	(\ref{SPDE_Ito}) is the nonlinear term $ K\ast \rho_t$ since this prevents us from directly applying known results in the existing literature on SPDEs, such as those found in \cite{kry}. We  take advantage of the specific SPDE structure to employ a fixed point argument, e.g., the recent work  \cite{Hi}.

	\subsection{Related literatures}
	In contrast to interacting particle system, which are driven by idiosyncratic noise, see \cite{Bres, Carri,Catti,Font,FournierJourdain,FHM, Tardy, Gui,JabinWang,Nguyen,  Pisa, Serfaty, Toma}, the literature on common noise and singular kernels  remains limited. For systems with uniformly Lipschitz interaction coefficients, \cite{Cogui} established conditional propagation of chaos. The entropy method has recently been explored for systems with common noise, as shown in \cite{Shao} for the Navier–Stokes equations, \cite{Chen2} for the Hegselmann–Krause model, and \cite{Niko} for mean-field systems with bounded kernels. 

	In the following, we will focus only on the results of moderately interacting systems:  in  \cite{Oelschlager84}, Oelschläger introduced and studied moderately interacting particle systems, which are used to obtain a local nonlinear partial differential equation. The idea is to take a radially symmetric function $V$ and then consider the interaction potential $V^N = N^\beta V(N^{\frac{\beta}{d}} \cdot)$,    with   $\beta \in (0,1)$, which converges to the Dirac delta at $0$ in the sense of distributions. The article \cite{Oelschlager84}  is part of a series of works by the author on this subject (see also \cite{Oelschlager85,Oelschlager87}). The convergence results were improved  in \cite{JourdainMeleard} and \cite{Meleard}. In the recent paper \cite{FlandoliLeimbachOlivera}
	it was  developed a semigroup approach which enables them to show uniform
	convergence of mollified empirical measures,  see \cite{FlandoliLeocata,Leocata, FlandoliOliveraSimon,Live, Simon, Guo} for additional applications of this method.  Recently, in \cite{Pisa} and \cite{ORT}, the authors developed new quantitative estimates for a class of moderately interacting particle systems using a semigroup approach. This technique was further improved in \cite{Hao}, \cite{Knorst}, and \cite{Simon2}. About more advances in  moderate particle systems,  see, for instance, \cite{Ansgar}, \cite{Chen}, \cite{correa},  \cite{Ansgar2}, and \cite{Steve}.
	
	The problem   of deriving   models represented by SPDE with  transport noise  and singular kernels
	from microscopic models of  stochastic interacting particles
	has not been highly investigated in the literature, just with the contributions in \cite{Chen2}, \cite{Niko} and   \cite{Shao}. As we will see  in more detail in Section \ref{examples}, our result may be applied to the following well-known models:
	the stochastic  $2d$ Navier-Stokes equation,  the stochastic Burgers 
	equation  and the  stochastic parabolic-elliptic Keller-Segel PDE in any dimension $d\geq 1$.

	\subsection{Notations}
	For $d \geq 1 $, we denote by $ C^{k}\left(\mathbb{T}^d\right) $ the space of $ k $-times differentiable functions on the torus $ \mathbb{T}^d \doteq \left[-\frac{1}{2}, \frac{1}{2}\right]^d $, where $ k \in \mathbb{N} \cup \{\infty\}$. The set of density functions defined on $ \mathbb{T}^d $ is denoted by $ \mathcal{P}\left(\mathbb{T}^d\right) $. The corresponding spaces on $\mathbb{R}^d$ are defined by  $ C^{k}\left(\mathbb{R}^d\right)$ and  $\mathcal{P}\left(\mathbb{R}^d\right)$, respectively.
	
	For a measurable space $(X,\mathcal{M},\mu)$ and $f $ a measurable function in it (which we denote by $f \in \mathcal{M}$) we write 
	$$\left\langle\mu,f\right\rangle \doteq \int_{X}fd\mu$$
	for the duality pair involving the measure $\mu$ and the function $f$ in this space.
	Also, for  $a \in [1,\infty)$  the Lebesgue's space is given by
	$$L^a=L^a(X)=\left\{f \in \mathcal{M} \mid\|f\|_{a}\doteq \left(\int_{X}|f|^a d\mu\right)^{\frac{1}{a}}<\infty\right\}
	$$
	and if $a=\infty$ 
	$$L^{\infty}=L^{\infty}(X)=\left\{f \in \mathcal{M} \mid\|f\|_{\infty}\doteq \text{ess} \sup_{x \in X}|f(x)|<\infty\right\}.
	$$
	In some contexts we will write $\|\cdot\|_{a} =\|\cdot\|_{L^a(X)}$.
	
	Given $(U,\|\cdot\|_U)$ a Banach space, we denote by $U^*$ its dual and the associated duality pair by $(\cdot,\cdot)_{U,U^*}$.
	With this notation, for $T >0$ and denoting $\mathcal{L}$ the set of Bochner measurable functions, the Bochner's space is denoted by 
	\begin{equation*}
		\resizebox{\hsize}{!}{%
			$L^aU = L^a([0,T];U) = \left\{f:[0,T] \to U, f \in \mathcal{L}\mid \|f\|_{L^aU} \doteq \left(\int_0^T\|f(t)\|_U^a \ dt \right)^{\frac{1}{a}}< \infty\right\},$%
		}
	\end{equation*}
	for $a \in [1,\infty)$ and if $a=\infty$ by
	\begin{equation*}
		\resizebox{\hsize}{!}{%
			$L^{\infty}U=L^{\infty}([0,T];U) = \left\{f:[0,T] \to U, f \in \mathcal{L}\mid \|f\|_{T,U} \doteq \text{ess} \sup_{t\in[0,T]}\|f(t)\|_U < \infty\right\}.                 $%
		}
	\end{equation*}

	For the space of tempered distributions on $\mathbb{T}^d$, we write $\mathcal{S}'$.  So, for $q > 1, n \in \mathbb{R}$,  the Bessel potential space reads
	$$H_{q}^{n}=H_{q}^{n}\left(\mathbb{T}^{d}\right)=\left\{f \in \mathcal{S}' \mid\|f\|_{n,q}\doteq\left\|(I-\Delta)^{\frac{n}{2}} f\right\|_{q}<\infty\right\}.
	$$
	
	\noindent For $\gamma \in (0,1]$ the H\"older's space on $\mathbb{T}^d$ is given by $$C^{\gamma}=C^{\gamma}\left(\mathbb{T}^d\right)=\left\{f:\mathbb{T}^d \to \mathbb{R} \mid\|f\|_{\gamma}\doteq \|f\|_{\infty} + \sup_{x\neq y\in \mathbb{T}^d}\frac{|f(x)-f(y)|}{|x-y|^{\gamma}}<\infty\right\}.$$
	
	Let a filtered probability space $(\Omega,\mathcal{F},(\mathcal{F}_t)_{t\geq0},\mathbb{P})$, a Banach space $(U,\|\cdot\|_U)$, a real number $ q \in [1,\infty]$, and a stopping time $0 <\tau \leq T$. We  denote by  $\mathcal{X}$ the set of $U$-valued, $\left(\mathcal{F}_t\right)_{t \in[0, T]}$-adapted and continuous processes $X =\left\{X_{s}\right\}_{s \in[0, \tau]}$. We define 
	$$S_{\mathcal{F}}^{q}([0, \tau] ; U) = \left\{X \in \mathcal{X} \mid \Big\|\|X\|_{\tau,U}\Big\|_{L^q(\Omega)} < \infty\right\}$$
		
		Also, let $\mathcal{Y}$ denote the set of  $U$-valued predictable processes $Y=\left\{Y_{s}\right\}_{s \in[0, \tau]}$. Then, for $q \in [1,\infty)$ we define
		$$L_{\mathcal{F}}^{q}([0,\tau];U) = \left\{Y \in \mathcal{Y}\mid\Big\|\|Y\|_{L^qU}\Big\|_{L^q(\Omega)} < \infty\right\}.$$
		
			These are Banach spaces endowed with their respective norms.
			
			\subsection{Assumptions}
			
			\begin{enumerate}
				\item[$(\mathbf{A}^V)$] The mollifier $V \in C^2\left(\mathbb{R}^d\right) \cap \mathcal{P}\left(\mathbb{R}^d\right)$ is such that $\text{supp} \, V \subset \left\{x \in \mathbb{R}^d; |x|< \frac{1}{2}\right\}$. 
				
				\item[$(\mathbf{A}^F)$] The vector field $F: \mathbb{T}^d \times \mathbb{R} \to \mathbb{R}^d$ is a bounded Lipschitz continuous function, i.e., there exists $L>0$ such that
				\begin{align*}
					&|F(x,u)| \le L, \quad \forall x \in \mathbb{T}^d , \  u\in \mathbb{R}.\\
					&|F(x,u) - F(y,v)| \le L ( |x-y| + |u-v|), \quad \forall x,y \in \mathbb{T}^d , \  u,v \in \mathbb{R}.
				\end{align*}
				
				\item[$(\mathbf{A}^{I})$] The vector field $F: \mathbb{T}^d \times \mathbb{R} \to \mathbb{R}$ is given by  $F(x,u) = u$.
				
				\item[$(\mathbf{A}^K)$] There exists $q \ge 2$ such that $q > d$, and $\gamma \in(0,1]$ and $C_K>0$ such that for any $f \in  L^q (\mathbb{T}^d)$, it holds \[ \|K \ast f\|_{\gamma} \le C_K\|f\|_{q}. \]
				\item[$(\mathbf{A}^{c})$]
				The diffusion coefficients $\nu$ and $\sigma: [0,T] \times \mathbb{T}^d \to \mathbb{R}^{d \times d}$ are measurable, and for every $t \in [0,T]$ satisfy
				
				$(\mathbf{A}^{c}_i)$ There exists $M>0$ such that
				\[ \nu_t^{jk}(\cdot),\sigma_t^{jk}(\cdot) \in \left(C^2\left(\mathbb{T}^d\right),\| \cdot \|_{C^2}\right), \quad \text{for } j,k=1,...,d. \]  
				\[ \text{and } \quad \sum_{j,k=1}^d   \Big( \|\nu_t^{jk}(\cdot)\|_{C^2} + \|\sigma_t^{jk}(\cdot)\|_{C^2} \Big) \le M.    \]
				\[ \hspace{-90px} (\mathbf{A}^{c}_{ii}) \hspace{15px}  \sum_{j=1}^d \partial_j \sigma_t^{jk}(x) = 0, \, \quad \text{for } k=1,...,d; \, x \in \Rd.   \]
				$(\mathbf{A}^{c}_{iii})$ There exist $\Lambda,\lambda>0$ such that, for all $x \in \mathbb{T}^d$, $ \xi=(\xi^i) \in \mathbb{R}^d$,
				\[ \Lambda |\xi|^2\geq \sum_{i,j,k=1}^d \nu_t^{ik}(x) \nu_t^{jk}(x)\, \xi^i\xi^j \ge \lambda \, |\xi|^2. \]
			\end{enumerate}
			
			\subsection{Main theorems.}
			
			Regarding the limiting equation \eqref{SPDE_Ito}, we will prove the following existence and uniqueness theorem, based on Krylov's $L^q$-theory of SPDEs, see \cite{kry}.
			
			\begin{thm}\label{Teo_Krylov} Assume $(\mathbf{A}^c)$ and one of the two regimes:
				\begin{enumerate}
					\item $(\mathbf{A}^F)$ and $(\mathbf{A}^K)$.
					\item $(\mathbf{A}^I)$ and $(\mathbf{A}^K)$.
									\end{enumerate}
				Let $0 \le \rho_0 \in L^1 \cap H^{1-\frac{2}{q}}_q\left(\mathbb{T}^d\right)$, with $\norm{\rho_0}_{1}=1$. For each $\kappa>0$, there exists a $T>0$ depending on $\kappa,\lambda, q, C_K, L, M$ and $d$ such that if $\norm{\rho_0}_{q} = \kappa$,  the SPDE (\ref{SPDE_Ito}) admits a unique nonnegative solution in
				\begin{align*}
					\mathbb{M} \doteq L_{\mathcal{F}^B}^q\left([0, T]; H_q^{1}\left(\mathbb{T}^d\right)\right) \cap S_{\mathcal{F}^B}^{\infty}\left([0, T] ; L^1 \cap L^q\left(\mathbb{T}^d\right)\right).
				\end{align*}
			\end{thm}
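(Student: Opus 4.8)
The plan is to treat \eqref{SPDE_Ito} as a semilinear perturbation of a linear SPDE and to close the nonlinearity by a fixed-point argument in $\mathbb{M}$. The only genuinely nonlinear ingredient is the coupling through $K\ast\rho_t$ inside the drift; once this term is frozen, the equation becomes linear and fits Krylov's $L^q$-theory of SPDEs. Concretely, for a fixed $\bar\rho\in\mathbb{M}$ I set $b^{\bar\rho}_t(x)\doteq F\big(x,(K\ast\bar\rho_t)(x)\big)$ and consider the linear problem obtained from \eqref{SPDE_Ito} by replacing $F(\cdot,K\ast\rho_t)$ with $b^{\bar\rho}_t$. Writing its second-order part in non-divergence form, the leading operator is $\tfrac12\sum_{i,j}a^{ij}_t\partial_{ij}$ with $a^{ij}_t=\sum_k(\nu^{ik}_t\nu^{jk}_t+\sigma^{ik}_t\sigma^{jk}_t)$, and the transport noise carries coefficient $\sigma_t$. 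I would then let $\Phi(\bar\rho)\doteq\rho$ be the unique solution of this frozen problem and prove that $\Phi$ is a contraction on a suitable closed ball of $\mathbb{M}$ for $T$ small.

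First I would check that the frozen problem is admissible for Krylov's theory. The decisive point is \emph{stochastic parabolicity}: the effective diffusion obtained after removing the quadratic variation of the transport noise is $a^{ij}_t-\sigma^{ik}_t\sigma^{jk}_t=\sum_k\nu^{ik}_t\nu^{jk}_t$, which by $(\mathbf{A}^c_{iii})$ is bounded below by $\lambda|\xi|^2$; the upper bound and the $C^2$-regularity of the coefficients come from $(\mathbf{A}^c_i)$, while the divergence-free condition $(\mathbf{A}^c_{ii})$ rewrites the noise as $\sum_i\partial_i(\sigma^{ik}_t\rho)$, i.e.\ in conservative form. The frozen drift is bounded: in regime~(1) assumption $(\mathbf{A}^F)$ gives $\|b^{\bar\rho}_t\|_\infty\le L$ uniformly, whereas in regime~(2), where $F(x,u)=u$, the estimate $(\mathbf{A}^K)$ yields $\|b^{\bar\rho}_t\|_\infty=\|K\ast\bar\rho_t\|_\infty\le C_K\|\bar\rho_t\|_q$, so $b^{\bar\rho}\in L^\infty([0,T];C^\gamma)$ with norm governed by $\|\bar\rho\|_{\mathbb{M}}$. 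Krylov's $L^q$-estimates then produce a unique $\rho\in\mathbb{M}$ with a maximal-regularity bound on $\|\rho\|_{\mathbb{M}}$ in terms of $\|\rho_0\|_{H^{1-\frac{2}{q}}_q}$ and of the drift. Since every term of \eqref{SPDE_Ito} is now conservative, testing against the constant $1$ gives mass preservation, $\|\rho_t\|_1=\|\rho_0\|_1=1$, and a comparison argument for the frozen linear equation gives $\rho\ge0$ whenever $\rho_0\ge0$. Hence $\Phi$ preserves the closed convex set of nonnegative unit-mass elements of $\mathbb{M}$, and a short-time estimate calibrates an invariant ball $B_R$ (with $R$ scaling in $\kappa$ in regime~(2)).

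Next comes the contraction. For $\bar\rho^1,\bar\rho^2\in B_R$ set $\rho^i=\Phi(\bar\rho^i)$ and $w=\rho^1-\rho^2$; then $w$ solves the same linear SPDE with zero initial datum and forcing $-\nabla\cdot(\rho^1b^{\bar\rho^1}-\rho^2b^{\bar\rho^2})$, which I split as
\begin{equation*}
\rho^1b^{\bar\rho^1}-\rho^2b^{\bar\rho^2}=w\,b^{\bar\rho^1}+\rho^2\big(b^{\bar\rho^1}-b^{\bar\rho^2}\big).
\end{equation*}
The first summand is linear in $w$ and, using $\|b^{\bar\rho^1}\|_\infty$ and the $H^1_q$-control of $w$, contributes to $\|w\|_{\mathbb{M}}$ with a small prefactor $T^\theta$ that can be absorbed on the left; the second is the genuine contraction term, controlled through the Lipschitz bound of $(\mathbf{A}^F)$ (resp.\ the identity $(\mathbf{A}^I)$) and $(\mathbf{A}^K)$ by
\begin{equation*}
\big\|b^{\bar\rho^1}_t-b^{\bar\rho^2}_t\big\|_\infty\le L\,\|K\ast(\bar\rho^1_t-\bar\rho^2_t)\|_\infty\le L\,C_K\,\|\bar\rho^1_t-\bar\rho^2_t\|_q.
\end{equation*}
Combined with $\|\rho^2\|_{\mathbb{M}}\le R$, Krylov's estimate for $w$ then yields $\|w\|_{\mathbb{M}}\le C(R)\,T^\theta\,\|\bar\rho^1-\bar\rho^2\|_{\mathbb{M}}$. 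Choosing $T$ small, through $\kappa,\lambda,q,C_K,L,M,d$ via $R$ and $C(R)$, makes $C(R)T^\theta<1$, so $\Phi$ is a contraction and the Banach fixed-point theorem furnishes the unique solution in $B_R$.

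The step I expect to be the main obstacle is the \emph{regularity mismatch} in the drift. Assumption $(\mathbf{A}^K)$ delivers only $K\ast\bar\rho\in C^\gamma$, not $C^1$, so $b^{\bar\rho}=F(\cdot,K\ast\bar\rho)$ is in general merely bounded and H\"older, and \emph{not} differentiable; one cannot expand $\nabla\cdot(\rho\,b^{\bar\rho})$ classically into $b^{\bar\rho}\cdot\nabla\rho+\rho\,\nabla\cdot b^{\bar\rho}$. This forces keeping the equation in divergence form and invoking the version of Krylov's $L^q$-theory that accommodates bounded measurable first-order coefficients, treating $\nabla\cdot(\rho\,b^{\bar\rho})$ as a single distributional derivative of $\rho\,b^{\bar\rho}\in L^q$. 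Tracking every constant through this step, so that the final smallness of $T$ reads off explicitly in $\kappa,\lambda,q,C_K,L,M,d$, is the delicate bookkeeping on which the contraction hinges, particularly in regime~(2), where the non-uniform drift bound $C_K\|\bar\rho\|_q$ couples the radius $R$ to $\kappa$.
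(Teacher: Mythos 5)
Your proposal follows essentially the same route as the paper's proof: freeze $K\ast\rho$ to obtain a linear SPDE, solve it via Krylov's $L^q$-theory (treating the divergence-form drift exactly as you describe, i.e.\ as a free term satisfying the $H^{-1}_q$-Lipschitz condition rather than expanding it classically), and run a Banach fixed-point argument on a ball of nonnegative, unit-mass elements for small $T$. The only notable difference is bookkeeping: the paper carries out both the ball-invariance and the contraction by hand, applying It\^o's formula for the $L^q$-norm of the $H^1_q$-valued process together with Gr\"onwall's lemma in the weaker metric $\big\|\left\|\cdot\right\|_{T,q}\big\|_{L^{\infty}(\Omega)}$ on the ball $\mathbb{B}\subset S^{\infty}_{\mathcal{F}^B}([0,T];L^q)$, rather than invoking Krylov's maximal-regularity estimate with a $T^{\theta}$ absorption in the full $\mathbb{M}$-norm as you suggest.
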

			The proof will be presented in Appendix \ref{apC}.

			\begin{thm} \label{first main}
				Assume $(\mathbf{A}^V)$, $(\mathbf{A}^c)$, $(\mathbf{A}^F)$,  and $(\mathbf{A}^K)$, let $T_{max}$ be the maximal existence time for (\ref{SPDE_Ito}) and fix $T \in \left(0,T_{max}\right)$. In addition, let the dynamics of the particle system be given by (\ref{particles}) with $\nu=Id$, $\sigma$ independent of the space  and  for any $m \ge 1$,
				\begin{align*}
					\sup_{N \in \mathbb{N}} \Big \| \left\| \rho^N_0 \right\|_{q} \Big \|_{{L^m(\Omega)}} < \infty
				\end{align*}
				with $ d \geq 1$, $q>d$ given by $(\mathbf{A}^K)$ and  $\beta \in \left(0,\frac{1}{2\big[1 + \frac{1}{d} - \frac{1}{q}\big]}\right) $.
				Then
				
				\begin{align*}
					\Big \| \left\|\rho - \rho^N \right\|_{T,q} \Big \|_{L^m(\Omega)}  &\le C \Big \| \left\|\rho_0 - \rho^N_0 \right\|_{q} \Big \|_{L^m(\Omega)} + C N^{-\varkappa} \, \nonumber 
				\end{align*}
				where
				\begin{align*}
					\varkappa = \min \left(\frac{\beta}{d}\gamma,\frac{1}{2}-\beta\Big(1 + \frac{1 }{d}-\frac{1}{q}\Big) \right)
				\end{align*}
				\noindent and $\rho$ is a  solution of SPDE \eqref{SPDE_Ito} with initial condition $\rho_0$.
			\end{thm}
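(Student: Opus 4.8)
The plan is to first derive the SPDE satisfied by the mollified empirical measure $\rho^N_t=V^N\ast S^N_t$ and then compare it with $\rho_t$ via an $L^q$-energy estimate. Write $G_t(x):=F\big(x,(K\ast\rho^N_t)(x)\big)$. Applying It\^o's formula to $V^N(x-X^{i,N}_t)$, using \eqref{particles} with $\nu=\mathrm{Id}$ and $\sigma$ space-independent, and averaging over $i$, I obtain
\begin{equation*}
\mathrm d\rho^N_t=\mathcal L_t\rho^N_t\,\mathrm dt-\nabla\cdot\big(\rho^N_t\,G_t\big)\,\mathrm dt+\mathcal E^N_t\,\mathrm dt-\nabla\rho^N_t\cdot\sigma_t\,\mathrm dB_t+\mathrm dM^N_t,
\end{equation*}
where $\mathcal L_t=\tfrac12\Delta+\tfrac12(\sigma_t\sigma_t^{\mathsf T})\colon\nabla^2$ reproduces the second-order operator of \eqref{SPDE_Ito} (here $\nu\nu^{\mathsf T}=\mathrm{Id}$), the idiosyncratic martingale is $M^N_t=-\tfrac1N\sum_i\int_0^t(\nabla V^N)(\cdot-X^{i,N}_s)\cdot\mathrm dW^i_s$, and the drift commutator is
\begin{equation*}
\mathcal E^N_t=\nabla\cdot\big(\rho^N_t\,G_t\big)-\frac1N\sum_{i=1}^N(\nabla V^N)(\cdot-X^{i,N}_t)\cdot G_t(X^{i,N}_t).
\end{equation*}
Subtracting the equation for $\rho_t$, the two transport noises match (same $B$), so $w_t:=\rho_t-\rho^N_t$ solves a linear SPDE driven by $\mathcal L$, the nonlinearity difference, $-\mathcal E^N$, the transport noise $-\nabla w_t\cdot\sigma_t\,\mathrm dB_t$, and $-\mathrm dM^N_t$.

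Next I would run the energy estimate by applying the It\^o formula for the $H^1_q$-valued process $w_t$ to $\|w_t\|_q^q$. The decisive structural point is that, since $\sigma_t$ is space-independent (so $(\mathbf A^c_{ii})$ holds trivially), the It\^o correction from the quadratic variation of $-\nabla w_t\cdot\sigma_t\,\mathrm dB_t$, namely $\tfrac{q(q-1)}2\int|w|^{q-2}|\sigma_t^{\mathsf T}\nabla w|^2$, cancels exactly the contribution of the $\tfrac12(\sigma_t\sigma_t^{\mathsf T})\colon\nabla^2$ part of $\mathcal L_t$ after an integration by parts. The common noise therefore leaves no net contribution, and the only surviving second-order term is the coercive $\tfrac12\Delta$ stemming from $\nu\nu^{\mathsf T}=\mathrm{Id}$, yielding the dissipation $-\tfrac{2(q-1)}q\big\|\nabla(|w|^{q/2})\big\|_2^2$; this is what absorbs the derivative-loaded error terms.

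I would then estimate the three remaining sources. (a) The nonlinearity difference $\nabla\cdot(\rho_t G^{\rho}_t)-\nabla\cdot(\rho^N_t G_t)$, with $G^{\rho}_t(x)=F(x,(K\ast\rho_t)(x))$, splits into a transport piece in $w$ (bounded using $|F|\le L$) and a piece $\rho^N_t\big(G^{\rho}_t-G_t\big)$ controlled by the Lipschitz bound on $F$ and by $(\mathbf A^K)$ through $\|K\ast w_t\|_\infty\le\|K\ast w_t\|_\gamma\le C_K\|w_t\|_q$; after integrating by parts and using Young's inequality these produce a term absorbed by the dissipation plus a Gronwall term with coefficient $\sim(1+\|\rho^N_t\|_q^2)$. (b) The commutator is an exact divergence, $\mathcal E^N_t=\nabla\cdot\Psi^N_t$ with $\Psi^N_t=\tfrac1N\sum_i V^N(\cdot-X^{i,N}_t)\big(G_t(\cdot)-G_t(X^{i,N}_t)\big)$; since $V^N$ is supported at scale $N^{-\beta/d}$ and $K\ast\rho^N_t$ is $\gamma$-H\"older with $\|K\ast\rho^N_t\|_\gamma\le C_K\|\rho^N_t\|_q$ by $(\mathbf A^K)$, the Lipschitz/H\"older bound gives $\|\Psi^N_t\|_q\lesssim N^{-\beta\gamma/d}(1+\|\rho^N_t\|_q)\|\rho^N_t\|_q$; pairing with $|w|^{q-2}w$ and integrating by parts transfers the derivative to $w$ (absorbed by the dissipation), leaving the rate $N^{-\beta\gamma/d}$, the first entry of $\varkappa$. (c) The martingale bracket is $\mathrm d[M^N]_t=\tfrac1{N^2}\sum_i|(\nabla V^N)(\cdot-X^{i,N}_t)|^2\,\mathrm dt$, and with $\|\nabla V^N\|_q\sim N^{\beta(1+1/d-1/q)}$ and H\"older ($\tfrac{q-2}q+\tfrac2q=1$, using $q\ge2$) the correction obeys $\tfrac{q(q-1)}2\int|w|^{q-2}\mathrm d[M^N]_t\lesssim\|w_t\|_q^{q-2}\,N^{-1}\|\nabla V^N\|_q^2=\|w_t\|_q^{q-2}N^{-2\varkappa_2}$, with $\varkappa_2=\tfrac12-\beta(1+\tfrac1d-\tfrac1q)>0$ by the admissible range of $\beta$; Young's inequality turns this into the rate $N^{-\varkappa_2}$, the second entry of $\varkappa$.

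Finally, collecting the bounds, controlling the genuine martingale term $q\int|w|^{q-2}w\,\mathrm dM^N_t$ by the Burkholder--Davis--Gundy inequality (its bracket is of the same order as above), and invoking a stochastic Gronwall argument, I would pass to $\sup_{t\le T}$ and to the $L^m(\Omega)$ norm. This step first requires a priori moment bounds $\sup_N\big\|\,\|\rho^N\|_{T,q}\big\|_{L^{m'}(\Omega)}<\infty$, which I would obtain from the hypothesis $\sup_N\big\|\,\|\rho^N_0\|_q\big\|_{L^{m'}(\Omega)}<\infty$ by the same energy estimate applied to $\rho^N$ alone, the bound on $\|\rho_t\|_q$ being guaranteed on $[0,T]\subset[0,T_{max})$. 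The outcome is the asserted estimate with $\varkappa=\min\big(\tfrac\beta d\gamma,\varkappa_2\big)$. I expect the main obstacle to be twofold: realizing the commutator bound at the sharp rate $N^{-\beta\gamma/d}$, which genuinely needs the divergence structure $\mathcal E^N=\nabla\cdot\Psi^N$ — a naive $L^q$ bound on $\mathcal E^N$ would cost the growing factor $N^{\beta(1+1/d-1/q)}$ and ruin convergence — and making the It\^o calculus rigorous in the non-Hilbertian space $H^1_q$ while bookkeeping the noises so that the common noise cancels and the idiosyncratic noise yields precisely $N^{-2\varkappa_2}$.
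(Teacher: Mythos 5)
Your strategy coincides with the paper's own proof in all its main components: Krylov's It\^o formula for the $L^q$-norm of the $H^1_q$-valued process $\rho-\rho^N$, the exact cancellation of the common-noise second-order term against its It\^o correction (so that only the coercive $\tfrac12\Delta$ survives and the first-order stochastic integral vanishes as an integral of an exact derivative), the commutator estimate at rate $N^{-\beta\gamma/d}$ obtained from $\|K\ast\rho^N_t\|_\gamma\le C_K\|\rho^N_t\|_q$ on the support of $V^N$ (this is the paper's term $c$, and your observation that a naive $L^q$ bound on the commutator would be ruinous is exactly why both you and the paper keep the divergence structure and move the derivative onto $\rho-\rho^N$), the quadratic-variation term at rate $N^{-2\varkappa_2}$ with $\varkappa_2=\tfrac12-\beta(1+\tfrac1d-\tfrac1q)$ (the paper's $I_2$), Doob/BDG for the idiosyncratic martingale (the paper's Lemma on $M^N_t$), and uniform-in-$N$ moment bounds for $\|\rho^N\|_{T,q}$ proved by running the same energy estimate on $\rho^N$ alone (the paper's Lemma proved in Appendix A).

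There is, however, one step that fails as written. In your item (a) you decompose $\rho_tG^\rho_t-\rho^N_tG_t=w_tG^\rho_t+\rho^N_t\big(G^\rho_t-G_t\big)$, so the factor $\|K\ast w_t\|_\infty\le C_K\|w_t\|_q$ ends up multiplied by $\rho^N_t$, and after Young/H\"older you arrive at a Gr\"onwall term whose coefficient is the \emph{random} quantity $1+\|\rho^N_t\|_q^2$. Plain Gr\"onwall applied after taking expectations is then unavailable (one cannot factor $\mathbb{E}\big[\|\rho^N_s\|_q^2\,\|w_s\|_q^q\big]$), and the stochastic Gr\"onwall lemmas you invoke require exponential integrability of $\int_0^T\big(1+\|\rho^N_s\|_q^2\big)\,ds$, uniformly in $N$, to return moment bounds; your hypotheses and a priori lemma only give polynomial moments of every order for $\|\rho^N\|_{T,q}$, and polynomial moments of all orders do not imply exponential moments. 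Hence this route cannot deliver the claimed $L^m(\Omega)$ estimate for all $m\ge1$. The paper avoids the problem by the opposite pairing: add and subtract $F(x,K\ast\rho^N_s)\rho_s$, i.e. write $\rho_tG^\rho_t-\rho^N_tG_t=\rho_t\big(G^\rho_t-G_t\big)+G_t\,w_t$, so the Lipschitz difference is multiplied by the \emph{limit} solution $\rho_t$; since Theorem \ref{Teo_Krylov} produces $\rho\in S^\infty_{\mathcal{F}^B}\big([0,T];L^1\cap L^q\big)$, one has $\|\rho\|_{T,q}\le C$ almost surely with deterministic $C$, the Gr\"onwall coefficient becomes deterministic, and the proof closes by Jensen's inequality and the classical Gr\"onwall lemma applied to $r\mapsto\mathbb{E}\big(\sup_{t\le r}\|\rho_t-\rho^N_t\|_q\big)^m$. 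The random factor $\|\rho^N_t\|_q$ is harmless only in the additive error terms (your (b) and (c)), where it multiplies the $N^{-\varkappa}$ rates and is controlled by the moment lemma; it must not appear as a Gr\"onwall coefficient.
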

			
			\begin{thm}\label{teo_d1}
				Assume $(\mathbf{A}^V)$, $(\mathbf{A}^c)$, $(\mathbf{A}^F)$, $K = \delta_0$, let $T_{max}$ be the maximal existence time for (\ref{SPDE_Ito}) and fix $T \in (0,T_{max})$. In addition, let the dynamics of the particle system be given by (\ref{Ndensity}) with $\nu=Id$, $\sigma$ independent of the space and for any $m\geq1$, 
				\begin{align*}
					\sup_{N \in \mathbb{N}} \Big \| \left\| \rho^N_0 \right\|_{2} \Big \|_{L^m(\Omega)} < \infty
				\end{align*}
				with $d=1$ and $ \beta \in \left(0,\frac{1}{3}\right) $.
				Then
				\begin{align*}
					\Big \| \left\|\rho - \rho^N \right\|_{T,2} \Big \|_{L^m(\Omega)}  &\le C \Big \| \left\|\rho_0 - \rho^N_0 \right\|_{2} \Big \|_{L^m(\Omega)} \nonumber + C N^{-\varkappa} \, \nonumber 
				\end{align*}
				where
				\begin{align*}
					\varkappa = \min \left(\frac{\beta}{2}, \frac{1}{2}-\frac{3}{2}\beta \right)
				\end{align*}
				\noindent and $\rho$ is a  solution of SPDE \eqref{SPDE_Ito} with initial condition $\rho_0$.
			\end{thm}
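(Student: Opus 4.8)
The plan is to reproduce the Itô/energy scheme behind Theorem~\ref{first main}, specialised to $d=1$, $q=2$ and $K=\delta_0$; the one genuinely new feature is that the interaction becomes \emph{local}, since for $K=\delta_0$ the mean-field term in (\ref{Ndensity}) collapses to $\tfrac1N\sum_{k}V^N(X_t^{i,N}-X_t^{k,N})=\rho_t^N(X_t^{i,N})$, so the limiting drift in (\ref{SPDE_Ito}) is $\nabla\cdot(\rho_t\,F(\cdot,\rho_t))$. First I would apply Itô's formula to $V^N(x-X_t^{i,N})$ and average over $i$ to obtain, for $\rho_t^N=V^N\ast S_t^N$, an SPDE of the same type as (\ref{SPDE_Ito}) carrying explicit remainders: commutators between the mollification $V^N\ast$ and both the variable-coefficient second-order operator and the transport field $\sigma_t\cdot\nabla$; a commutator between $V^N\ast$ and the drift $\nabla\cdot(\,\cdot\,F(\cdot,\rho^N))$; and, from the idiosyncratic noises $W^i$, an Itô correction together with a martingale whose $L^2$-bracket is of order $N^{-1}\|\nabla V^N\|_2^2$.

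Next I would set $w^N:=\rho-\rho^N$, subtract the two equations, and apply Itô's formula to $\|w_t^N\|_2^2$. In the second-order bookkeeping the $\sigma\sigma$ contribution cancels against the Itô correction of the transport noise (this is where $(\mathbf{A}^c_{ii})$ enters, writing that term in conservative form), leaving the coercive dissipation $-\lambda\|\nabla w_t^N\|_2^2$ supplied by $(\mathbf{A}^c_{iii})$, against which every first-order term is measured by Young's inequality. The nonlinear term is the crux: after integrating by parts it equals $\int\big(\rho\,F(\cdot,\rho)-\rho^N F(\cdot,\rho^N)\big)\cdot\nabla w^N$, which I split as $\int w^N F(\cdot,\rho)\cdot\nabla w^N+\int\rho^N\big(F(\cdot,\rho)-F(\cdot,\rho^N)\big)\cdot\nabla w^N$. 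The first piece is bounded using $|F|\le L$ and Young; for the second, the Lipschitz bound gives $L\int|\rho^N|\,|w^N|\,|\nabla w^N|\le\tfrac\lambda4\|\nabla w^N\|_2^2+C\|\rho^N\|_\infty^2\|w^N\|_2^2$, and here I would invoke the one-dimensional Gagliardo–Nirenberg inequality $\|\rho^N\|_\infty\le C\|\rho^N\|_2^{1/2}\|\rho^N\|_{H^1}^{1/2}$ so that the prefactor $g(t):=\|\rho_t^N\|_\infty^2$ becomes integrable in time, thanks to the uniform a priori bounds placing $\rho^N$ in $L^\infty([0,T];L^2)\cap L^2([0,T];H^1)$ uniformly in $N$.

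Taking expectation annihilates the idiosyncratic martingale and leaves its bracket together with the commutator remainders, after which a Gronwall inequality with the integrable coefficient $g$ closes the estimate into $\Big\|\,\|w^N\|_{T,2}\,\Big\|_{L^m(\Omega)}\le C\Big\|\,\|w^N_0\|_{2}\,\Big\|_{L^m(\Omega)}+C\,N^{-\varkappa}$, the $L^m(\Omega)$ version following from a moment (BDG) form of the same computation. The two error sources fix the competing exponents of $\varkappa$: the deterministic mollification bias and the commutators are estimated through the regularization bounds by $N^{-\beta\gamma/d}$ with the effective Hölder exponent $\gamma=\tfrac12$ coming from $H^1(\mathbb{T})\hookrightarrow C^{1/2}(\mathbb{T})$, giving the rate $\tfrac\beta2$ in $d=1$; the stochastic fluctuation, controlled through the martingale bracket with $\|\nabla V^N\|_2^2\sim N^{3\beta}$, contributes at order $(N^{-1}N^{3\beta})^{1/2}=N^{-(\frac12-\frac{3\beta}2)}$, so that $\varkappa=\min(\tfrac\beta2,\tfrac12-\tfrac{3\beta}2)$ and the range $\beta\in(0,\tfrac13)$ is exactly the one keeping $\varkappa>0$. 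I expect the nonlinear estimate to be the main obstacle: unlike the regular-kernel Theorem~\ref{first main}, where $(\mathbf{A}^K)$ lets one avoid placing $\rho^N$ in strong norms, here one must genuinely use the $d=1$ structure to control the local product $\rho^N F(\cdot,\rho^N)$, so the whole argument rests on propagating the uniform $L^2([0,T];H^1)$ bound and on the finiteness of $\int_0^T\|\rho_t^N\|_\infty^2\,dt$.
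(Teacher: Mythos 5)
Your overall architecture (It\^o's formula for the $L^2$-norm of the difference, cancellation of the $\sigma\sigma^{\top}$ second-order term against the It\^o correction of the transport noise, Doob/BDG for the idiosyncratic martingale, and the two competing error sources $N^{-\beta/2}$ and $N^{-(\frac12-\frac32\beta)}$) matches the paper, and your exponent bookkeeping is correct. The genuine gap is in your treatment of the nonlinear term. You split
\[
\rho F(\cdot,\rho)-\rho^N F(\cdot,\rho^N)= w^N F(\cdot,\rho)+\rho^N\bigl(F(\cdot,\rho)-F(\cdot,\rho^N)\bigr),\qquad w^N:=\rho-\rho^N,
\]
so that after Young's inequality your Gr\"onwall coefficient is $g(t)=\|\rho^N_t\|_\infty^2$, a \emph{random} weight that you control only through the uniform-in-$N$ bounds placing $\rho^N$ in $L^\infty([0,T];L^2)\cap L^2([0,T];H^1)$, i.e.\ through polynomial moments of $\int_0^T g$. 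This does not close: Gr\"onwall must be applied pathwise and produces the factor $\exp\bigl(C\int_0^T\|\rho^N_s\|_\infty^2\,ds\bigr)$, so taking $L^m(\Omega)$-norms afterwards requires exponential moments of $\int_0^T\|\rho^N_s\|_\infty^2\,ds$, which neither Lemma \ref{additive estimate} nor Lemma \ref{additive estimate d=2} provides (the hypotheses only give polynomial moments of $\|\rho^N_0\|_2$, and polynomial moments of every order do not imply exponential ones). Taking expectation before Gr\"onwall does not help either, since $\mathbb{E}\bigl[g(s)\|w^N_s\|_2^2\bigr]$ does not factor; and stochastic Gr\"onwall lemmas with random coefficients either demand the same exponential moments or only yield moments of order $p<1$, not arbitrary $m\geq1$.

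The paper avoids exactly this trap with the opposite add-and-subtract: up to the mollification commutator it writes $\rho F(\cdot,\rho)-\rho^N F(\cdot,\rho^N)=\rho\,\bigl(F(\cdot,\rho)-F(\cdot,\rho^N)\bigr)+F(\cdot,\rho^N)\,w^N$, so the quadratic weight falls on $|\rho|$ rather than $|\rho^N|$, and it applies the one-dimensional inequality $\|f\|_\infty^2\le\|f\|_2^2+2\|f\|_2\|\nabla f\|_2$ to the \emph{difference} $w^N$ (not to $\rho^N$). The resulting Gr\"onwall coefficient is $C+\|\rho\|_{T,2}^2+\|\rho\|_{T,2}^4$, which is bounded by a deterministic constant because the solution constructed in Theorem \ref{Teo_Krylov} satisfies $\bigl\|\|\rho\|_{T,2}\bigr\|_{L^\infty(\Omega)}<\infty$; the empirical quantity $\rho^N$ then enters only linearly, through $|F|\le L$, and additively in the commutator term $\hat C_N|\rho^N|$ with $\hat C_N=N^{-\beta}+\|\rho^N_s\|_{\frac12}N^{-\beta/2}$, where the polynomial moment bounds of Lemmas \ref{additive estimate} and \ref{additive estimate d=2} (via $L^2H^1_2\hookrightarrow L^2C^{1/2}$) suffice because no exponentiation occurs. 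If you rearrange your splitting in this way, the rest of your proposal goes through and yields the stated rate $\varkappa=\min\bigl(\frac\beta2,\frac12-\frac32\beta\bigr)$.
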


			In view of the previous result, we obtain a rate of convergence for the genuine empirical measure, which can be interpreted as a propagation of chaos for the marginals of the empirical measure of the particle system. Following \cite[Section 8.3]{BogachevII}, let us introduce the Kantorovich-Rubinstein metric which reads, for any two probability measures $\mu$ and $\nu$ on $\R^d$,
			\begin{equation}\label{eq:defWasserstein}
				\|\mu - \nu \|_{0} = \sup \left\{ \int_{\R^d} \phi \, d(\mu-\nu) \, ; ~ \phi \text{ Lipschitz  with } \|\phi\|_{L^\infty(\R^d)}\leq 1 \text{ and } \|\phi\|_{\text{Lip}} \leq 1 \right\} .
			\end{equation}

			\begin{cor} \label{cor:rateEmpMeas}
				Let the same assumptions as in Theorem~\ref{first main}  or  Theorem~\ref{teo_d1} be in place. 
				Then for any $\varepsilon \in (0,\varkappa)$, there exists $C>0$ such that, for any $N\in \mathbb{N}$,
				\[
				\bigg\| \sup_{t\in[0,T]}  \left\|S_{t}^N - \rho_{t} \right\|_{0} \bigg\|_{L^m(\Omega)}  \leq 
				C \left(  \bigg\|  \left\|S^N_0-\rho_0\right\|_{0} \bigg\|_{L^m(\Omega)} + \, N^{-\varkappa + \varepsilon} \right) .
				\]
			\end{cor}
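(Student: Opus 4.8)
The plan is to transfer the rate already obtained for the \emph{mollified} empirical measure $\rho^N=V^N\ast S^N$ in Theorem~\ref{first main} (resp.\ Theorem~\ref{teo_d1}) to the genuine empirical measure $S^N$, now measured in the weaker metric $\|\cdot\|_0$. Everything rests on the splitting
\begin{equation*}
\left\|S_t^N-\rho_t\right\|_0\le \left\|S_t^N-\rho_t^N\right\|_0+\left\|\rho_t^N-\rho_t\right\|_0,
\end{equation*}
where the first term is a deterministic regularization error and the second is the stochastic approximation error governed by the main theorems. I would take $\sup_{t\in[0,T]}$, and only at the very end the $L^m(\Omega)$-norm.

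For the regularization error I argue pathwise. Since $\langle\rho_t^N,\phi\rangle=\frac1N\sum_i\int \phi(y)V^N(y-X_t^{i,N})\,dy$ and $\int V^N=1$, for any test function with $\|\phi\|_\infty\le1$ and $\|\phi\|_{\mathrm{Lip}}\le1$ one has $\langle S_t^N-\rho_t^N,\phi\rangle=\frac1N\sum_i\int\bigl(\phi(X_t^{i,N})-\phi(X_t^{i,N}+z)\bigr)V^N(z)\,dz$. By $(\mathbf{A}^V)$ and the scaling $V^N=N^\beta V(N^{\beta/d}\cdot)$, the kernel $V^N$ is supported in $\{|z|<\tfrac12N^{-\beta/d}\}$, so the integrand is $\le|z|\le\tfrac12N^{-\beta/d}$. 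Hence $\sup_{t}\|S_t^N-\rho_t^N\|_0\le\tfrac12N^{-\beta/d}$ deterministically, and since $\varkappa\le\frac{\beta}{d}\gamma\le\frac{\beta}{d}$ this term is already $\le CN^{-\varkappa}$.

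For the stochastic term I use the elementary embeddings on the unit-measure torus: restricting the supremum to $\|\phi\|_\infty\le1$ gives $\|\rho_t^N-\rho_t\|_0\le\|\rho_t^N-\rho_t\|_{1}\le\|\rho_t^N-\rho_t\|_{q}$, so that $\sup_t\|\rho_t^N-\rho_t\|_0\le\|\rho^N-\rho\|_{T,q}$. Taking $L^m(\Omega)$-norms and applying Theorem~\ref{first main} (or Theorem~\ref{teo_d1}) bounds this by $C\big\|\,\|\rho_0^N-\rho_0\|_q\big\|_{L^m(\Omega)}+CN^{-\varkappa}$.

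It then remains to convert the $L^q$ initial discrepancy produced by the theorem into the Kantorovich initial discrepancy $\|S_0^N-\rho_0\|_0$ of the statement, and I expect this to be the crux. Writing $\rho_0^N-\rho_0=V^N\ast(S_0^N-\rho_0)+(V^N\ast\rho_0-\rho_0)$, the second piece is a deterministic mollification error of $\rho_0\in H^{1-2/q}_q$ that tends to zero at a rate fixed by the regularity of $\rho_0$, which I would absorb into $N^{-\varkappa+\varepsilon}$. The difficult piece is the first: by Kantorovich--Rubinstein duality and Young's inequality, $\|V^N\ast(S_0^N-\rho_0)\|_q\le\|\nabla V^N\|_q\,\|S_0^N-\rho_0\|_0$, where $\|\nabla V^N\|_q\sim N^{\beta(1+\frac1d-\frac1q)}$ \emph{blows up}, since convolution against the concentrating kernel turns a weak (dual-Lipschitz) bound into a strong $L^q$ bound. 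The admissible range of $\beta$ forces this exponent below $\tfrac12$, so the blow-up is subcritical; taming it — trading the factor $N^{\beta(1+1/d-1/q)}$ for a harmless loss $N^{\varepsilon}$ by interpolating the dual-Lipschitz norm against the uniformly bounded total variation of $S_0^N-\rho_0$ — is the step I expect to generate the $N^{-\varkappa+\varepsilon}$ error. Collecting the three estimates and taking $L^m(\Omega)$-norms then yields the stated bound.
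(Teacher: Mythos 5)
Everything you complete here coincides with the paper's own proof: the same triangle-inequality splitting through $\rho^N_t$, the same pathwise mollification bound $\sup_t\|S^N_t-\rho^N_t\|_0\le C N^{-\beta/d}\le CN^{-\varkappa}$ (the paper's inequality \eqref{eq:rateunmun}), and the same chain $\|\rho^N_t-\rho_t\|_0\le\|\rho^N_t-\rho_t\|_{L^1}\le\|\rho^N_t-\rho_t\|_q$ followed by an application of Theorem~\ref{first main} (or Theorem~\ref{teo_d1}). What you should know is that the paper's proof consists of \emph{exactly} these steps and nothing more: it applies the theorem and stops, so its right-hand side still contains $\big\|\|\rho_0-\rho^N_0\|_q\big\|_{L^m(\Omega)}$ rather than $\big\|\|S^N_0-\rho_0\|_0\big\|_{L^m(\Omega)}$, and the $\varepsilon$ appearing in the statement of Corollary~\ref{cor:rateEmpMeas} is never used in its argument. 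So the step you correctly single out as ``the crux'' is a genuine gap --- one shared by the paper --- and your proposal deserves credit for even noticing it.

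However, your sketched repair cannot close this gap. The two Young-type bounds available are $\|V^N\ast\mu\|_q\lesssim\|\nabla V^N\|_q\,\|\mu\|_0$ with $\|\nabla V^N\|_q\sim N^{\beta(1+\frac1d-\frac1q)}$, and $\|V^N\ast\mu\|_q\le\|V^N\|_q\,\|\mu\|_{\text{TV}}$ with $\|V^N\|_q\sim N^{\beta(1-\frac1q)}$; \emph{both} constants diverge polynomially, so any interpolation between them produces a product of positive powers of two diverging factors times $\|\mu\|_0^{\theta}\,\|\mu\|_{\text{TV}}^{1-\theta}$, and the total-variation bound $\|\mu\|_{\text{TV}}\le 2$ supplies no decay to trade against --- you can never reduce the loss to $N^{\varepsilon}$. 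In fact no argument of this type can succeed, because the needed implication is false under the standing hypotheses. Take $d=1$, $q=2$, $\rho_0\equiv 1$, place $k\sim N^{1-\beta/2}$ particles at the origin and the remaining $N-k$ on a uniform grid. Then $\|\rho^N_0\|_2\le 1+\|V\|_2+o(1)$ is bounded uniformly in $N$ (so the hypotheses of Theorem~\ref{teo_d1} hold), and $\|S^N_0-\rho_0\|_0\lesssim k/N\sim N^{-\beta/2}\to 0$, yet $\|\rho^N_0-\rho_0\|_2\ge \frac{k}{N}\|V^N\|_2-o(1)=\|V\|_2-o(1)$ stays bounded below: the theorem's conclusion is then an $O(1)$ bound while the corollary's claimed right-hand side tends to zero. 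The honest resolutions are either (i) to state the corollary with the initial term $\big\|\|\rho_0-\rho^N_0\|_q\big\|_{L^m(\Omega)}$ --- which is what your argument and the paper's actually prove, with no $\varepsilon$ needed --- or (ii) to add a hypothesis quantifying $\|\rho^N_0-\rho_0\|_q$, e.g.\ i.i.d.\ initial sampling from a sufficiently regular $\rho_0$, and only then absorb that rate into $N^{-\varkappa+\varepsilon}$.
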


			\begin{proof}
				Let $t\in (0,T_{max})$. Let us observe first that 
				there exists $C>0$ such that for any Lipschitz continuous function $\phi$ on $\R^d$, one has
				\begin{align}\label{eq:rateunmun}
					\left|\left\langle \rho^N_{t},\phi\right\rangle - \left\langle S^N_{t},\phi\right\rangle \right| \leq \frac{C \|\phi\|_{\text{Lip}}}{N^{\frac{\beta}{d}}} \quad a.s.
				\end{align}
				Indeed, 
				\begin{align*}
					\left|\left\langle S_{t}^{N}, \phi\right\rangle - \left\langle \rho^N_t, \phi \right\rangle \right| &=\left| \left\langle S_{t}^{N}, \left(\phi-\phi\ast V^{N}\right)\right\rangle \right|\\
					&\leq \left\langle S_{t}^{N}, \int_{\R^{d}} V(y)~ |\phi(\cdot)-  \phi\left(\cdot- \frac{y}{N^{\beta}}\right) |   dy \right\rangle \\
					&\leq \frac{C \|\phi\|_{\text{Lip}}}{N^{\frac{\beta}{d}}}.
				\end{align*}

				Recalling the definition \eqref{eq:defWasserstein} of the Kantorovich-Rubinstein distance, it comes
				\begin{align*}
					\left\| \sup_{t\in[0,T]} \left\| S_{t}^N - \rho_{t} \right\|_{0}  \right\|_{m} &\leq \left\| \sup_{t\in[0,T]} \left\| S_{t}^N - \rho^N_{t}\right\|_{0} \right\|_{m} +  \left\| \sup_{t\in[0,T]} \sup_{ \|\phi\|_{L^\infty}\leq 1} \left\langle \rho_{t}^N - \rho_{t},\phi \right\rangle \right\|_{m} \\
					&\leq \left\| \sup_{t\in[0,T]} \left\| S_{t}^N - \rho^N_{t}\right\|_{0} \right\|_{m} +  \left\| \sup_{t\in[0,T]}  \left\| \rho_{t}^N - \rho_{t}\right\|_{L^1(\R^d)} \right\|_{m},
				\end{align*}
				where we are using the following notation for the norm in $L^m(\Omega)$: $\| \cdot \|_{L^m(\Omega)} = \| \cdot \|_m$.
				Now applying inequality \eqref{eq:rateunmun} to the first term on the right-hand side of the above inequality, and Theorem \ref{first main} (or Theorem \ref{teo_d1})  to the second term, we obtain the inequality of Corollary \ref{cor:rateEmpMeas}.
				
			\end{proof}

			\subsection{Application for some  stochastic PDE } \label{examples}
			
			Let  $A>0$ and $f_A: \mathbb{R}  \rightarrow \mathbb{R} $ be a $C^2(\mathbb{R})$ bounded function such that
			\begin{enumerate}
				\item $f_A(x)=x$, for $x\in [-A,A]$,
				\item $f_A(x)= A$, for $x> A+1$ and $f_A(x)= -A$, for $x< -(A+1)$,
				\item $\|f_{A}'\|_\infty \leq 1$ and $\|f_{A}''\|_\infty <\infty$.
			\end{enumerate}

			As a consequence, $\|f_A\|_\infty\leq A+1$.
			Now $F_{A}$ is given by
			\begin{equation}\label{eq:defF0}
				F_{A}  : (x_{1},\dots, x_{d})^{\top} \mapsto \left(f_{A}(x_{1}) ,\dots, f_{A}(x_{d})\right)^{\top} .
			\end{equation}

			When $A=\infty$ then  $ F_{A}=Id$. Now, we consider  the stochastic PDE 
			
			\begin{equation}\label{Stratonich}
				\mathrm{d}\rho_t = \frac{1}{2} \Delta  \rho_t  dt  - \nabla \cdot  \big(\rho_t (  K\ast \rho_t)\big)\, d  t - \nabla \rho_t \cdot \sigma_t\, \circ d  B_t 
			\end{equation}
			
			\noindent where the integral is in the Stratonovich sense, or  equivalently in It\^o  formulation
			\begin{equation}\label{Ito2}
				\mathrm{d}\rho_t = \frac{1}{2}\left( \Delta \rho_t + \sum_{i, j=1}^d \partial_{i j} \rho_t  \sum_{k=1}^{d} \sigma_t^{i k} \sigma_t^{j k}\right)
				dt  - \nabla \cdot  \big(\rho_t (K\ast \rho_t)\big)\, d  t - \nabla \rho_t \cdot \sigma_t\, d  B_t.
			\end{equation}
			
			We consider the associated stochastic  moderate particle  system

			\begin{equation}\label{particles2}
				d  X_t^{i,N} = F_{A} \left( K \ast \rho_t^{N} \left(X_t^{i,N}\right)\right) \, dt +  \,d W_t^{i} + \sigma_t\, 
				dB_t  \quad 
			\end{equation}

			\noindent and also the particle system without cutoff:
			\begin{equation}\label{particles3}
				d\widetilde{X}_{t}^{i,N}= \frac{1}{N}\sum_{k=1}^{N} (K \ast V^{N})(\widetilde{X}_{t}^{i,N} -\widetilde{X}_{t}^{k,N})\; dt +\; dW_{t}^{i} +  \sigma_t 
				\,dB_t, \,\,\, t\leq T,\,\,\,  1\leq i \leq N. 
			\end{equation}
			
			For fixed $N\in \N^*$, the drift term is smooth and bounded. Hence, this particle system has a unique strong solution. We denote by $\widetilde{S}_{t}^N$ its empirical distribution, and define 
			$\widetilde{\rho}^N = V^N\ast \widetilde{S}^N$.   We denote  $A_{T}:=C_{K} \Big\|\left\|\rho\right\|_{T,q} \Big\|_{L^{\infty}(\Omega)}$  where $C_{K}$ is given by   $(\mathbf{A}^K)$ and 
			$\rho$ is the unique solution of  equation (\ref{Ito2}). Then   $\rho$ is also the unique solution of the equation

			\begin{equation}\label{Ito3}
				\mathrm{d}\rho_t = \frac{1}{2}\left( \Delta \rho_t + \sum_{i, j=1}^d \partial_{i j} \rho_t  \sum_{k=1}^{d} \sigma_t^{i k} \sigma_t^{j k}\right)
				dt  - \nabla \cdot  \big(\rho_t F_{A}(K\ast \rho_t)\big)\, d  t - \nabla \rho_t \cdot \sigma_t\, d  B_t 
			\end{equation}

			The proof of the following result  is based on 
			Corollary 2.3 in \cite{Pisa}.  
			
			\begin{cor}\label{import}	
				Assume  $(\mathbf{A}^c)$, $(\mathbf{A}^V)$,  $(\mathbf{A}^I)$ and $(\mathbf{A}^K)$  and let $\rho $ the unique solution of the SPDE 
				(\ref{Stratonich})  in $ L_{\mathcal{F}^B}^q\left([0, T]; H_q^{1}\left(\mathbb{T}^d\right)\right) \cap S_{\mathcal{F}^B}^{\infty}\left([0, T] ; L^1 \cap L^q\left(\mathbb{T}^d\right)\right)$, given by Theorem \ref{Teo_Krylov}.  In addition, let the dynamics of the particle system be given 
				by (\ref{particles3})   and  
				for any $m \ge 1$,
				
				\begin{align*}
					\sup_{N \in \mathbb{N}} \Big \| \left\|\widetilde{\rho}^N_0 \right\|_{q} \Big \|_{{L^m(\Omega)}} < \infty
				\end{align*}
				with $ d \geq 1$, $q>d$ given by $\left(\mathbf{A}^K\right)$ and  $\beta \in \left(0,\frac{1}{2\big[1 + \frac{1}{d} - \frac{1}{q}\big]}\right) $. Then 
				we have 
				
				\begin{align*}
					\PP \left(  \left\|\widetilde{\rho}_{t}^N - \rho_{t} \right\|_{T,q} \geq \eta  \right) \leq \frac{C}{\eta^m} \left( \Big\|  \left\|\widetilde{\rho}^N_0- \rho_0\right\|_{T,q} \Big\|_{L^m(\Omega)}  + C N^{-\varkappa}\right)^m .
				\end{align*}

			\end{cor}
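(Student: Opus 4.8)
The plan is to deduce the statement for the genuine (cutoff-free) system \eqref{particles3} from the quantitative estimate that Theorem \ref{first main} provides for the truncated system \eqref{particles2}, paying for the passage with a localization argument whose cost is itself controlled by Theorem \ref{first main}.

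First I would check that the truncated field $F_A$ from \eqref{eq:defF0} verifies $(\mathbf{A}^F)$: by construction $\|f_A\|_\infty\le A+1$, $\|f_A'\|_\infty\le 1$ and $f_A\in C^2$, so $F_A$ is bounded and globally Lipschitz. Hence \eqref{particles2}, being \eqref{particles} with $\nu=\mathrm{Id}$, $\sigma$ space-independent and $F(x,u)=F_A(u)$, satisfies the hypotheses of Theorem \ref{first main}; as \eqref{particles2} and \eqref{particles3} share the same initial configuration, the moment bound $\sup_N\|\,\|\widetilde\rho_0^N\|_q\,\|_{L^m(\Omega)}<\infty$ passes to $\rho_0^N=\widetilde\rho_0^N$. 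I would fix the truncation level strictly above $A_T$; then $(\mathbf{A}^K)$ and the definition of $A_T$ give $\|K\ast\rho_t\|_\infty\le\|K\ast\rho_t\|_{\gamma}\le C_K\|\rho_t\|_q\le A_T<A$ a.s.\ on $[0,T]$, so the cutoff is inactive along $\rho$ and $\rho$ remains the unique solution of \eqref{Ito3}. Theorem \ref{first main} then yields, writing $\rho^N=V^N\ast S^N$ for the mollified empirical measure of \eqref{particles2},
\[
\Big\|\,\|\rho-\rho^N\|_{T,q}\,\Big\|_{L^m(\Omega)}\le C\Big\|\,\|\widetilde\rho_0^N-\rho_0\|_q\,\Big\|_{L^m(\Omega)}+C\,N^{-\varkappa}.
\]

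Next I would relate $\rho^N$ to $\widetilde\rho^N$ by localization. The systems \eqref{particles2} and \eqref{particles3} are driven by the same Brownian motions and differ only through the truncation, whose two drifts coincide as long as $\|K\ast\widetilde\rho_t^N\|_\infty\le A$, because $F_A=\mathrm{Id}$ on $[-A,A]^d$. With $\tau_N:=\inf\{t\ge 0:\|K\ast\widetilde\rho_t^N\|_\infty\ge A\}$, pathwise uniqueness for the smooth bounded-drift cutoff system forces $\widetilde X_t^{i,N}=X_t^{i,N}$, hence $\widetilde\rho_t^N=\rho_t^N$, for $t<\tau_N$, and by path-continuity up to $t=\tau_N$. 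Thus on $\{\tau_N\ge T\}$ one has $\widetilde\rho^N=\rho^N$ throughout $[0,T]$, while on $\{\tau_N<T\}$ continuity gives $\|K\ast\rho_{\tau_N}^N\|_\infty=A$, so that $\|K\ast(\rho_{\tau_N}^N-\rho_{\tau_N})\|_\infty\ge A-A_T$ and therefore $\|\rho^N-\rho\|_{T,q}\ge C_K^{-1}(A-A_T)$ by $(\mathbf{A}^K)$.

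Finally I would assemble the bound by splitting on $\tau_N$. On $\{\tau_N\ge T\}$, using $\widetilde\rho^N=\rho^N$, Markov's inequality and the displayed $L^m$ estimate give $\PP(\|\widetilde\rho^N-\rho\|_{T,q}\ge\eta,\ \tau_N\ge T)\le\PP(\|\rho^N-\rho\|_{T,q}\ge\eta)\le\eta^{-m}\|\,\|\rho^N-\rho\|_{T,q}\,\|_{L^m(\Omega)}^m$; on $\{\tau_N<T\}$ the margin estimate of the previous step gives $\PP(\tau_N<T)\le\PP(\|\rho^N-\rho\|_{T,q}\ge C_K^{-1}(A-A_T))$, again controlled by Markov and the same $L^m$ estimate. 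Adding the two contributions and inserting the bound on $\|\,\|\rho^N-\rho\|_{T,q}\,\|_{L^m(\Omega)}$ produces the claimed $\eta^{-m}$ concentration inequality. The main obstacle is precisely this removal of the truncation: the qualitative coincidence \emph{``$\widetilde\rho^N=\rho^N$ before $\tau_N$''} becomes quantitative only because the strictly positive margin $A-A_T>0$ turns the overshoot event $\{\tau_N<T\}$ into a deviation event for $\rho^N-\rho$ that Theorem \ref{first main} already controls; keeping this margin uniform in $N$ is the delicate point, and is exactly what the reduction modelled on Corollary~2.3 of \cite{Pisa} accomplishes.
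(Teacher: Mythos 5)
Your proposal is correct and takes essentially the same route as the paper's own proof: couple the cutoff system \eqref{particles2} and the cutoff-free system \eqref{particles3} under the same noise and initial data, show they coincide on a localization event (the paper phrases this through the event $\Omega_{N,A}$ on the truncated system's drift, you through the stopping time $\tau_N$ on the untruncated one—both rest on the same pathwise-uniqueness argument), convert failure of that event into a deviation of $\left\|\rho^N-\rho\right\|_{T,q}$ using $(\mathbf{A}^K)$ and the a.s.\ bound defining $A_T$, and conclude by Markov's inequality together with Theorem \ref{first main} applied with $F=F_A$. The one small adjustment needed to get the stated bound verbatim: with your cutoff level $A$ fixed independently of $\eta$, the overshoot term carries the prefactor $C_K^m(A-A_T)^{-m}$ instead of $\eta^{-m}$, so you should choose $A=A_T+C_K\,\eta$ (exactly the paper's choice, which makes the margin equal to $\eta$); this makes the constants $\eta$-dependent through Theorem \ref{first main}, but that is equally true of the paper's own proof.
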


			\begin{proof}
				Let us introduce some notations to distinguish the particle systems with and without cutoff. 
				We fix $A>A_{T}$ to be precisely chosen later.
				We consider the particles $\left(X^{i,N}\right)_{1\leq i\leq N}$ and $\left(\widetilde{X}^{i,N}\right)_{1\leq i\leq N}$ defined respectively by \eqref{particles2} and \eqref{particles3} with the same initial conditions and driven by the same family of independent $\R^d$-valued  Brownian motions $(W^i)_{i\in \N^*}$ and $B_{t}$. 
				For any $N\in \N^*$ and $A>0$, define
				\begin{align*}
					\Omega_{N,A} = \Bigg\{ \sup_{\substack{t\in [0,T] \\ i\in \{1,\dots,N\}}} \frac{1}{N}\bigg| \sum_{k=1}^{N} \left(K \ast V^{N}\right)\left(X_{t}^{i,N} - X_{t}^{k,N}\right)\bigg| \leq A\Bigg\} 
				\end{align*}
				and observe that on $\Omega_{N,A}$, we have $X^{i,N}_{t} = \widetilde{X}^{i,N}_{t}$ for all $t\in [0,T]$ and all $i\in \{1,\dots,N\}$.  Since $\frac{1}{N} \sum_{k=1}^{N} \left(K \ast V^{N}\right)\left(X_{t}^{i,N} - X_{t}^{k,N}\right) = K \ast \rho^N_{t}(X^{i,N}_{t})$, we also get that on $\Omega_{N,A}$, $\rho^N_{t} = \widetilde{\rho}^N_{t}$ for all $t\in [0,T]$ and all $i\in \{1,\dots,N\}$. Hence
				\begin{align*}
					\PP \left(  \left\|\widetilde{\rho}^N - \rho \right\|_{T,q} \geq \eta \right) &= \PP \left(\Omega_{N,A}^c \cap \left\{ \left\|\widetilde{\rho}^N - \rho \right\|_{T,q} \geq \eta \right\}\right) \\
					&\hspace{1cm}+\PP \left(\Omega_{N,A} \cap \left\{ \left\|\widetilde{\rho}^N - \rho \right\|_{T,q} \geq \eta \right\}\right) \\
					&\leq \PP \left(\Omega_{N,A}^c \right) +\PP \left( \left\| \rho^N - \rho \right\|_{T,q} \geq \eta \right) .
				\end{align*}

				Now by hypothesis  we have that $\left|K \ast \rho^N_{t}\left(X^{i,N}_{t}\right)\right| \leq C_{K} \|\rho^N\|_{T,q}$. Thus we get that for $A= C_{K} \left(\eta + \Big\|\left\|\rho\right\|_{T,q}\Big\|_{L^{\infty}(\Omega)}\right)$,
				\begin{align*}
					\PP \left(\Omega_{N,A}^c \right) &\leq \PP \left(\left\|\rho^N\right\|_{T,q} >\frac{A}{C_{K}}\right)\\
					&\leq \PP \left(\left\|\rho\right\|_{T,q} + \left\|\rho^N - \rho\right\|_{T,q} >\frac{A}{C_{K}}\right)\\
					&\leq \PP \left(\Big\|\left\|\rho\right\|_{T,q} \Big\|_{L^{\infty}(\Omega)} + \left\|\rho^N - \rho\right\|_{T,q} >\frac{A}{C_{K}}\right)\\
					&\leq \PP \left( \left\|\rho^N - \rho\right\|_{T,q} >\eta\right).
				\end{align*}
				
				Hence 
				\begin{align*}
					\PP \left( \left\|\widetilde{\rho}^N - \rho \right\|_{T,q} \geq \eta \right) \leq 2 \PP \left( \left\|\rho^{N} - \rho\right\|_{T,q} >\eta\right).
				\end{align*}
				Now using Markov's inequality and Theorem \ref{first main} with $F(x,u)=F_A(u)$, we obtain the desired result.
			\end{proof}

			\paragraph{Biot-Savart kernel and the $2d$ Navier-Stokes equation.}
			
			By considering the vorticity field $\xi_t$ associated to the stochastic incompressible two-dimensional Navier-Stokes solution $\rho_t$, 
			one gets equation \eqref{Ito2}   with the Biot-Savart kernel $K(x)=\frac{i}{2\pi} \sum_{k\in \mathbb{Z}^2, k\neq 0  } \exp(ik\cdot x) \, \frac{k^{\bot}}{k^{2}} $.
			The original Navier-Stokes solution is then recovered thanks to the formula $\rho_{t} = K\ast \xi_{t}$. 
			The kernel $K\in L^{p}(\mathbb{T}^2)$ with $p<2$ and $\|\nabla K \ast f \|_{q}\leq C \|f \|_{q} $ with $1<q<\infty$. Then
			$\|  K \ast f\|_{1, q} \leq C \| f\|_{q}  $ with $q>2$, thus  by Sobolev embedding $\|  K \ast f\|_{C^{1-\frac{2}{q}}(\mathbb{T}^2)} \leq C \| f\|_{q}$. This model  arises naturally in fluid mechanics; see, for instance, \cite{Brez} and \cite{FlandoliLuongo}. This case is covered by Corollary \ref{import}, also we can apply theorem \ref{first main} with  cutoff  $A> C_{K} \Big\|\left\|\rho\right\|_{T,q} \Big\|_{L^{\infty}(\Omega)}$ in the SPDE 
(\ref{Ito3})  where $C_{K}$ is given by   $(\mathbf{A}^K)$.

			\paragraph{Parabolic-elliptic Keller-Segel models in any dimension.} 
			An important and tricky example covered by this paper is the parabolic-elliptic Keller-Segel PDE, which takes the form \eqref{Ito2} where  the kernel $K$
			is the periodization of the function defined on $\mathbb{T}^d$ by $K_{0}(x)= -\chi_d \frac{x}{|x|^d}$ for some $\chi_d>0$. The $K$ verifies for all 
			$  f\in L^q(\mathbb{T}^d),  \  \| K\ast f  \|_{C^{1-\frac{d}{q} }} \leq C\, \|f\|_{L^q(\mathbb{T}^d)} $, see Lemma 41 in \cite{ORT}. 
			This case is also covered by Corollary \ref{import}, also we can apply theorem \ref{first main} with  cutoff  $A> C_{K} \Big\|\left\|\rho\right\|_{T,q} \Big\|_{L^{\infty}(\Omega)}$ in the SPDE 
(\ref{Ito3})  where $C_{K}$ is given by   $(\mathbf{A}^K)$.

			\paragraph{Burgers  Equation.} When $d = 1$, $K = \delta_0$ the equation (\ref{Stratonich})-(\ref{Ito2})
			is the  well-known stochastic scalar Burgers equation.  If the initial 
			condition $\rho_{0}\in H_{2}^{2}$ then by lemma 4.10 in \cite{Alonso} the solution
			verifies the maximum  principle, that is, $\| \rho \|_{\infty}\leq C_{T}\|\rho_{0}\|_{\infty}$. Taking $A> C_{T}\|\rho_{0}\|$  the solution of the equation (\ref{Ito2}) is also  solution of the equation 
			(\ref{Ito3}). The Theorem  \ref{teo_d1} covers this case.

			\smallskip
			\subsection{Heuristic deduction}
			
			For the ease of reading, we may write $X^i_t$ in place of $X^{i,N}_t$. \\ The starting $X_0^{i,N}$ are i.i.d. with common law $\mu_0$ with density $\rho_0$.
			
			Let $x \in \mathbb{T}^d$. Applying It{\^o}'s formula with the smooth function $V^N (x - \cdot)$ for each $i \in \{1, \ldots, N \}$ we have
				\begin{align*}
				V^N \left(x - X_t^{i}\right) & = \, V^N \left(x - X_0^{i}\right) \\ 
				&- \sum_{j=1}^d\int_0^t\partial_jV^N \left(x - X_s^{i}\right)      F_j \left(X_s^{i}, K\ast \rho_s^N \left(X_s^{i}\right)\right) \, d  s\\
				& + \frac{1}{2}\sum_{j,k=1}^d \int_0^t\partial_{jk}V^N \left(x - X_s^{i}\right) 
				\left(\sigma \sigma^{\top}\right)_s^{jk}(X_s^i) \, d  s\\
				& - \sum_{j,k=1}^d \int_0^t\partial_{j}V^N \left(x - X_s^{i}\right)  
				\sigma_s^{jk}\left(X^i_s\right) \, d  B_s^k\\
				& \,  \, - \sum_{j,k=1}^d\int_0^t\partial_jV^N \left(x - X_s^{i}\right) \nu_s^{jk}\left(X_s^i\right) \, d  W_s^{i,k} \\
				&+ \frac{1}{2} \sum_{j,k=1}^d \int_0^t \partial_{jk} V^N \left(x - X_s^{i}\right)\left(\nu\nu^{\top}\right)_s^{jk}\left(X_s^i\right) \, d s .
			\end{align*}
			with $\left(\sigma \sigma^{\top}\right)^{jk}_s \doteq  \sum_{l = 1}^{d} \sigma^{jl}_s\sigma^{kl}_s$ and $\left(\nu \nu^{\top}\right)^{jk}_s \doteq  \sum_{l = 1}^{d} \nu^{jl}_s\nu^{kl}_s$. Then since $\rho^N = V^N \ast S^N$ we have
			\begin{align}
				\rho_t^N(x) & = \, \rho^N_0(x) \nonumber \\
				&- \sum_{j=1}^d\int_{0}^{t}\left\langle S_{s}^{N}, \partial_jV^{N}(x-\cdot)F_j\left(\cdot, K\ast \rho^N_s(\cdot)\right)\right\rangle d s \nonumber\\
				& + \frac{1}{2}\sum_{j,k=1}^d\int_{0}^{t}\left\langle S_{s}^{N}, \partial_{jk}V^{N}(x-\cdot) \left(\sigma \sigma^{\top}\right)_s^{jk}(\cdot) \right\rangle d s \nonumber\\
				& -\sum_{j,k=1}^d \int_0^t\left\langle S_s^N, \partial_{j}V^N (x - \cdot) 
				\sigma_s^{jk}(\cdot) \right \rangle  d  B_s^k\nonumber\\
				& \,  \, - \frac{1}{N}\sum_{i=1}^N\sum_{j,k=1}^d\int_0^t\partial_{j}V^N \left(x - X^i_s\right) \nu_s^{jk}\left(X^i_s\right) \, dW_s^{i,k} \nonumber\\
				& \,  \, +\frac{1}{2}\sum_{j,k=1}^d\int_{0}^{t}\left\langle S_{s}^{N}, \partial_{jk}V^{N}(x-\cdot) \left(\nu \nu^{\top}\right)_s^{jk}(\cdot) \right\rangle d s . \label{Ndensity}
			\end{align}
			If  $\sigma$ and $\nu$ do not depend on the spatial variable we obtain 
			\begin{align}
				\rho_t^N(x) & = \, \rho^N_0(x) \nonumber \\
				&- \sum_{j=1}^d\int_{0}^{t}\left\langle S_{s}^{N}, \partial_jV^{N}(x-\cdot)F_j\left(\cdot, K\ast \rho^N_s(\cdot)\right)\right\rangle d s \nonumber\\
				& + \frac{1}{2}\sum_{j,k=1}^d\int_{0}^{t} \partial_{jk}\rho^N_s \left(\sigma \sigma^{\top}\right)_s^{jk}  d s \nonumber\\
				& -\sum_{j,k=1}^d \int_0^t \partial_{j}\rho^N_s 
				\sigma_s^{jk} d  B_s^k\quad \quad \quad \big(=\sigma_s^{\top} \nabla \rho_s^N \,dB_s\big)\nonumber\\
				& \,  \, - \frac{1}{N} \sum_{i=1}^N \sum_{j,k=1}^d \int_0^t \partial_{j}V^N \left(x - X^i_s \right) \nu_s^{jk} \, dW_s^{i,k} \quad \,\,\big( = \nu^{\top}_s\nabla V^N(x-X^i_s) \, dW^i_s\big)  \nonumber\\
				& \,  \, +\frac{1}{2}\sum_{j,k=1}^d\int_{0}^{t}\partial_{jk}\rho^N_s \left(\nu \nu^{\top}\right)_s^{jk}d s . \label{Ndensity_aditivo}
			\end{align}
			
			Under these conditions, we may conjecture that
			a limiting measure-valued deterministic process  $\rho_t$ exists and its
			evolution equation (in weak form) is given by

			\begin{align}
				\mathrm{d}\rho_t &= \frac{1}{2} \sum_{i, j=1}^d \partial_{i j} \left(\rho_t \sum_{k=1}^{d}(\nu_t^{ik}\nu_t^{jk} +\sigma_t^{i k} \sigma_t^{j k})\right)\, dt \nonumber\\ &\quad - \nabla \cdot  \big(\rho_t F(\cdot\,,  K\ast \rho_t)\big)\, d  t - \nabla \rho_t \cdot \sigma_t\, d  B_t .
			\end{align}
			
			\subsection{Definition of solution }
			
			\begin{defn} A family of random functions $\left\{\rho_t(\omega): t \ge 0, \omega \in \Omega\right\}$ lying in $S_{\mathcal{F} ^B}^{\infty}\left([0, T] ; L^1 \cap L^q\left(\mathbb{T}^d\right)\right)$ is a solution to (\ref{SPDE_Ito}) if $\rho_t$ satisfies the following stochastic integral equation for all $\phi \in C^2\left(\mathbb{T}^d\right)$,
				
\begin{align}
\left\langle \rho_t, \phi\right\rangle &= \left\langle \rho_0, \phi\right\rangle+ \int_0^t\left\langle \rho_s F(\cdot\,, K \ast \rho_s), \nabla \phi \right\rangle ds \nonumber \\
& \quad +\frac{1}{2} \int_0^t\left\langle \rho_s, \sum_{i,j=1}^d \partial_{ij}\phi \,   \sum_{k=1}^d (\nu_s^{ik} \, \nu_s^{jk}+ \sigma_s^{ik} \, \sigma_s^{jk}) \right\rangle \, ds \nonumber \\
&\quad +\int_0^t\left\langle \rho_s, \sum_{i=1}^d \partial_i \phi\,  \sum_{k=1}^{d} \sigma_s^{ik} \right\rangle \, d B_s^k.   \label{weak_sol}
\end{align}
\end{defn}
			
			
			\medskip

			\bigskip
			\begin{rem}\label{continhas}
				Let $q\geq2$. The following will be helpful throughout the text.
				\vspace{.4cm}
				\begin{itemize}
					\item[(i)]  $\nabla (| \rho (x) |^q)  = \, q | \rho (x) |^{q - 2} \rho (x) \nabla \rho(x)$. \\
					\item[(ii)] $\nabla (q | \rho (x) |^{q - 2}  \rho (x))= \, q (q - 1) | \rho (x) |^{q -	2}\nabla \rho (x)$. \\\end{itemize}
			\end{rem}

			\medskip
			
			\section{Proof of the main theorems}
			\subsection{Proof of Theorem \ref{first main}}
			\begin{proof}

	Applying to $(\rho - \rho^N)$ the Itô's formula for the $L^q$-norm of a $H_{q}^{1}$-valued process in \cite{kry} we have 
	
	\begin{align*}    &\|\rho_t - \rho_t^N\|_q^q = \, \|\rho_0 - \rho_0^N\|_q^q\\
		& \quad - \sum_{j,k=1}^d \int_0^t \int_{\R^d}q |\rho_s - \rho_s^N|^{q-2}(\rho_s - \rho_s^N)\partial_{j}(\rho_s - \rho_s^N)\sigma_s^{jk} \,dx \, dB_s^k\\
		& \quad  - \frac{1}{N}\sum_{i=1}^N\int_0^t\int_{\R^d}q |\rho_s - \rho_s^N|^{q-2}(\rho_s -\rho_s^N)\left(\nabla V^N\right) (x - X_s^{i}) \, dx \, dW_s^{i} \\
		& \quad +q(q-1)\int_{0}^{t}\int_{\mathbb{T}^d}|\rho_s - \rho_s^N|^{q-2}\nabla (\rho_s -\rho_s^N)\, \Big[ \rho_s F(x,K\ast \rho_s) \\
		& \quad \hspace{130px} -\left\langle S_{s}^{N},V^{N}(x-\cdot)\cdot F(\cdot, K\ast \rho_{s}^{N}(\cdot))\right\rangle \Big] dx\, d s\\
		& \quad + \frac{1}{2}\sum_{j,k=1}^d\int_{0}^{t} \int_{\mathbb{T}^d}q|\rho_s - \rho_s^N|^{q-2}(\rho_s - \rho_s^N)\partial_{jk}(\rho_s - \rho_s^N) \left(\sigma \sigma^T\right)_s^{jk} dx\, d s\\
		& \quad  - \frac{1}{2}  \int_0^t \int_{\R^d}q(q-1) |\rho_s -\rho_s^N|^{q-2}|\nabla (\rho_s -\rho_s^N)|^2 \, dx\, d s\\
		& \quad + \frac{1}{2} \int_{0}^{t}\int_{\mathbb{T}^{d}}  q(q-1)|\rho_s - \rho_{s}^{N}|^{q-2}\left|\sigma^T_s \nabla(\rho_s - \rho_s^N) \right|^2 \, dx \, ds\\
		& \quad+ \frac{1}{2 N^{2}} \sum_{i=1}^{N} \int_{0}^{t}\int_{\mathbb{T}^{d}}  q(q-1)\left|\rho_s -  \rho_{s}^{N}\right|^{q-2}\left| \left(\nabla V^N\right)(x-X_{s}^{i})\right|^{2} \, dx \, ds.
	\end{align*}
	
	\noindent Integration by parts gives us (recall that $\sigma$  does not depend on the spatial variable)
	\begin{align*}
		&\frac{1}{2}\sum_{j,k=1}^d\int_{0}^{t} \int_{\mathbb{T}^d}q|\rho_s - \rho_s^N|^{q-2}(\rho_s - \rho_s^N)\partial_{jk}(\rho_s - \rho_s^N) \left(\sigma \sigma^T\right)_s^{jk} \, dx \,d s \\ &= -\frac{1}{2} \int_{0}^{t}\int_{\mathbb{T}^{d}}  q(q-1)\left|\rho_s - \rho_{s}^{N}\right|^{q-2}\left|\sigma^T_s \nabla(\rho_s - \rho_s^N) \right|^2 \, dx \, ds, 
	\end{align*}
	hence we get 
	\begin{align}
		\left\| \rho_t - \rho^N_t\right\|_{q}^{q}&\le \left\| \rho_0 -\rho^N_0\right\|_{q}^{q}\nonumber+I_1 + I_2 \nonumber \\
		&-\frac{1}{2} \int_{0}^{t} \int_{\mathbb{T}^{d}} q(q-1)\left|\rho_s -  \rho^N_s\right|^{q-2}\left| \nabla (\rho_s - \rho^N_s)\right|^{2} \, dx \, ds \nonumber \\
		&-\sum_{j,k=1}^d\int_0^t\sigma^{jk}_s \underbrace{\int_{\R^d} \partial_j|\rho_s-\rho_s^N|^{q}\,dx }_{=0}d  B_s^k  \nonumber \\ 
		&-M_{t}^N,    \label{dec7second}
	\end{align}
	\noindent where 
	\begin{align*}
		I_1 \doteq q(q-1)\int_{0}^{t}\int_{\mathbb{T}^d}|\rho_s - \rho_s^N|^{q-2}\nabla (\rho_s& -\rho_s^N)\Big[\rho_sF\left(x, K\ast \rho_s\right) \\
		&-\left\langle S_{s}^{N}, V^{N}(x-\cdot)F\left(\cdot, K\ast \rho^N_s\right)\right\rangle \Big]\, dx\,ds,
	\end{align*}
	$$ I_2 \doteq \frac{1}{2 N^{2}} \sum_{i=1}^{N} \int_{0}^{t}\int_{\mathbb{T}^{d}}  q(q-1)\left|\rho_s - \rho_s^N\right|^{q-2}\left| \left(\nabla V^N\right)(x-X_{s}^{i})\right|^{2} \, dx \, ds,$$
	and
	$$M^N_{t} \doteq \frac{1}{N} \sum_{i=1}^{N}\int_{0}^{t} \int_{\mathbb{T}^d}q\left|\rho_s - \rho_s^N\right|^{q-2} (\rho_s- \rho_s^N) \,  ( \nabla V^{N})(x-X_{s}^{i}) \,dx \, dW_{s}^{i}.$$
	
	If $q >2$, by  Young inequality with $\frac{1}{\frac{q}{q-2}}+\frac{1}{\frac{q}{2}}=1$ we obtain
	\begin{align*}
		&I_2 = \frac{q(q-1)}{2 N} \sum_{i=1}^{N} \int_{0}^{t}\int_{\mathbb{T}^{d}} \left|\rho_s - \rho_s^N\right|^{q-2}\frac{1}{N}\left|\left(\nabla V^N\right)(x-X_{s}^{i})\right|^{2} \, dx \, ds \\
		&\hspace{-1px}\le \frac{q(q-1)}{2} \hspace{-3px}\int_{0}^{t}\left\| \rho_s - \rho_s^N\right\|_{q}^{q} ds+\frac{q(q-1)}{2N} \sum_{i=1}^{N} \int_{0}^{t} \hspace{-3px} \int_{\mathbb{T}^{d}}\frac{1}{N^\frac{q}{2}}\left|( \nabla V^{N})(x-X_{s}^{i})\right|^{q} d x \, d s 
	\end{align*}
	Now by (\ref{taxa derivada V}),
	\begin{align*}
		I_2 &\le C_2 \int_{0}^{t}\left\|\rho_s -  \rho^N_s\right\|_{q}^{q} d s+\frac{q(q-1)}{2}T N^{\left(-\frac{q}{2}+q\beta(1 + \frac{1 }{d})-\beta\right)}  \, \|\nabla V\|_{q}^q
	\end{align*}
	where $C_2 =\frac{q(q-1)}{2}$, if $q > 2$ and $C_2=0$, if $q=2$. 
	
	Now,  we  will estimate the first term  of  $I_1$. Since $F$ is Lipschitz,  $K$ satisfies $(\mathbf{A}^K)$,  and  $V \ge 0$  we have
	
	\begin{align*}
		\left| \left\langle S_s^{N}, \right. \right. & \left. \left. V^{N}(x - \cdot)\left(F\left(x, K\ast \rho_s^{N}(x)\right)-F(\cdot, K \ast \rho_s^{N}(\cdot))\right) \right \rangle \right| \\
		& \le\left\langle S_s^{N}, V^{N}(x-\cdot)\left|F\left(x, K \ast \rho_s^{N}(x)\right)-F\left(\cdot, K \ast \rho_s^{N}(\cdot)\right)\right|\right\rangle \\
		&\le L\left\langle S_s^{N}, V^{N}(x-\cdot)\left(|x - \cdot|+\left| K\ast (\rho_s^{N}(x)-\rho_s^{N}(\cdot))\right|\right) \right\rangle \\
		& \le L\left\langle S_s^{N}, V^{N}(x-\cdot)\left(|x - \cdot|+\left\|\rho_s^{N}\right\|_{q}|x-\cdot|^{\gamma}\right)\right\rangle \\
		& \le L\left(N^{-\frac{\beta}{d}}+\left\|\rho_s^{N}\right\|_{q} N^{-\frac{\beta}{d} \gamma }\right)\left|\rho_s^{N}(x)\right|,
	\end{align*}
	where in the last inequality we are using that $\text{supp} \, V \subset \left\{x \in \mathbb{R}^d\mid |x|< \frac{1}{2}\right\}$.  We add and subtract the terms $F(x,K\ast \rho_s^N)\rho_s$ and $F(x,K\ast \rho_s^N)\rho_s^N$ to deduce 
	\begin{align*} 
		&\left|F(x,K\ast \rho_s) \rho_s-\langle S_s^N, V^N(x-\cdot) F(\cdot, K\ast \rho_s^N)\rangle \right| \\
		& \le \left|F(x, K\ast \rho_s) \rho_s- F(x, K\ast \rho_s^N) \rho_s\right| \\
		&\quad+ \left|F(x, K\ast \rho_s^N)\rho_s- F(x, K\ast \rho_s^N) \rho^N_s\right| \\
		&\quad+ \langle S_s^N, V^N(x-\cdot)| F(x, K\ast \rho_s^N) - F(\cdot, K\ast \rho_s^N)|\rangle \\
		&\le L\left|K \ast (\rho_s - \rho_s^N)\right||\rho_s| \\
		&\quad+ L|\rho_s - \rho_s^N| \\
		&\quad+  L\left(N^{-\frac{\beta}{d}}+\left\|\rho_s^{N}\right\|_{q} N^{-\frac{\beta}{d} \gamma }\right)\left|\rho_s^{N}(x)\right| \\
		&\doteq (a+b+c).
	\end{align*}
	Now, using $\epsilon$-Young inequality for $\frac{1}{2} + \frac{1}{2} = 1$, we get
	\begin{align*}
		&|\nabla(\rho_s-\rho^N_s)|(a+b+c) \\
		&\le \epsilon |\nabla(\rho_s-\rho^N_s)|^2 + C_\epsilon (a+b+c)^2 \\ &\le \epsilon |\nabla(\rho_s-\rho^N_s)|^2 + C_\epsilon 3L^2\left(\left|K \ast (\rho_s - \rho_s^N)\right|^2|\rho_s|^2 +|\rho_s - \rho_s^N|^2 + C_N^2|\rho_s^N|^2 \right)
	\end{align*}
	where $C_N \doteq N^{-\frac{\beta}{d}}+\left\|\rho_s^{N}\right\|_{q} N^{-\frac{\beta}{d} \gamma}$. For the other constants, we will merge them into $C$, which may change from line to line. Thus we have 
	\begin{align*}
		|I_1|&\le  \epsilon \int_0^t \int_{\mathbb{T}^d} q (q - 1) | \rho_s - \rho_s^N |^{q-2} |\nabla (\rho_s - \rho_s^N)|^2 \, dx \, ds \\
		& +  C \int_0^t \int_{\mathbb{T}^d}  | \rho_s - \rho_s^N |^q \, dx \, ds. \\
		&+ C \int_0^t \int_{\mathbb{T}^d} | \rho_s - \rho_s^N |^{q-2}\big( \left|K \ast (\rho_s - \rho_s^N)\right|^2|\rho_s|^2+ C_N^2 |\rho_s^N|^2\big) \, dx \, ds. 
	\end{align*}
	We continue with the last term above. For $q > 2$, we use $1$-Young inequality for $\frac{1}{\frac{q}{q-2}} + \frac{1}{\frac{q}{2}} = 1$, 
	\begin{align*}
		&\hspace{-1px} \int_0^t \int_{\mathbb{T}^d} | \rho_s - \rho_s^N |^{q-2}\big( \left|K \ast (\rho_s - \rho_s^N)\right|^2|\rho_s|^2+ C_N^2 |\rho_s^N|^2\big) \, dx\, ds\\
		&\hspace{-1px}\le C \int_0^t \hspace{-2px}\int_{\mathbb{T}^d}| \rho_s - \rho_s^N |^q \, dx \, ds + C \int_0^t \hspace{-2px}\int_{\mathbb{T}^d} \|K \ast (\rho_s - \rho_s^N)\|_{\infty}^q \, |\rho_s|^q  + C_N^q  | \rho_s^N |^q \, dx \, ds \\
		&\hspace{-1px}\le C \int_0^t  \| \rho_s - \rho_s^N \|_q^q \, ds + C \|\rho\|_{T,q}^q  \int_0^t \|\rho_s - \rho_s^N\|_q^q  \, ds  + CT\,C_N^q  \| \rho^N \|_{T,q}^q   
	\end{align*}
	where in the last inequality we are using $(\mathbf{A}^K)$. Now, using that $\|\rho\|_{T,q}\le C$  a.s. due to Theorem \ref{Teo_Krylov}, we arrive at
	\begin{align}
		|I_1|&\le  \epsilon \int_0^t \int_{\mathbb{T}^d} q (q - 1) | \rho_s - \rho_s^N |^{q-2} \, |\nabla (\rho_s - \rho_s^N)|^2 \, ds\nonumber \\
		&\quad + C \int_0^t \| \rho_s - \rho_s^N \|_q^q \, ds + C \, C_N^q \|\rho^N \|_{T,q}^q.  \nonumber
	\end{align}
	
	Finally, we join the estimates with $\epsilon < \frac{1}{2}$ and continue from (\ref{dec7second}) to get
	\begin{align}
		\left\|\rho_t - \rho^N_t \right\|_q^{q}   &\le \left\|\rho_0 - \rho^N_0 \right\|_q^{q}+C\int_{0}^{t}\left\|\rho_s - \rho_s^N\right\|_{q}^{q} ds\nonumber\\
		&+CN^{\left(-\frac{q}{2}+q\beta(1 + \frac{1 }{d})-\beta\right)} \nonumber \\ 
		&+C\left(N^{-\frac{\beta}{d}}+\left\|\rho^N\right\|_{T,q} N^{-\frac{\beta}{d} \gamma}\right)^q  \left\| \rho^N\right \|_{T,q}^q \nonumber \\
		&+|M^N_{t}|. \nonumber
	\end{align}
	
	By Jensen's inequality with $| \cdot |^{\frac{m}{q}}$, $\frac{m}{q} > 1$  we have for all $ r \in [0,T]$ 
	\begin{align}
		\mathbb{E}\left( \sup_{t \in [0,r]}\left\|\rho_t - \rho^N_t \right\|_{q} \right)^m &\le C \, \mathbb{E}\left( \left\|\rho_0 -  \rho^N_0 \right\|_{q} \right)^m \nonumber \\ 
		&+C\int_{0}^{r}\mathbb{E}\left( \sup_{t \in [0,s]}\left\|\rho_t - \rho^N_t \right\|_{q} \right)^m ds\nonumber\\
		&+ C N^{\left(-\frac{1}{2}+\beta(1 + \frac{1 }{d}-\frac{1}{q})\right)\, m} \, \nonumber \\ 
		&+ C\left(N^{-\frac{\beta}{d}}\right)^m\mathbb{E}\| \rho^N \|_{T,q}^m \nonumber \\
		&+ C\left( N^{-\frac{\beta}{d} \gamma} \right)^m
		\mathbb{E}\left(\left \| \rho^N \right\|_{T,q}\left\|\rho^N\right\|_{T,q} \right)^m \nonumber\\
		&+C\,\mathbb{E}\left(\sup_{t\in [0,r]}|M^N_{t}| \right)^{\frac{m}{q}}.
		\label{dec 61} 	\end{align}
	Now we need a uniform in $N \in \mathbb{N}$ estimate for the regularized empirical measure, for which the proof is given in the Appendix \ref{apA} .
	\begin{lem}\label{additive estimate}
		Under the hypotheses of Theorem \ref{first main} or \ref{teo_d1}, we have the following:
		\begin{align*}
			\sup_{N \in \mathbb{N}}\Big \| \left\|\rho^N \right\|_{T,q} \Big \|_m  & < \infty.  \end{align*} 
	\end{lem}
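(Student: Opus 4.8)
The plan is to apply the It\^o formula for the $L^q$-norm of an $H^1_q$-valued process from \cite{kry} directly to $\rho^N$ (rather than to $\rho-\rho^N$ as in the proof of Theorem \ref{first main}), starting from the evolution equation \eqref{Ndensity_aditivo}, which is available since we work with $\nu=Id$ and $\sigma$ independent of the spatial variable. For each fixed $N$ the process $\rho^N_t=V^N\ast S^N_t$ is uniformly bounded in $C^2(\mathbb{T}^d)$, so $\big\|\,\|\rho^N\|_{T,q}\big\|_m$ is a priori finite and the point is only to make it uniform in $N$; moreover it suffices to treat $m\ge q$, the remaining exponents following since $\mathbb{P}$ is a probability measure. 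Writing the drift of $\rho^N$ as the transport term $-\nabla\cdot G^N$ with $G^N_j(x)=\langle S^N_s,V^N(x-\cdot)F_j(\cdot,K\ast\rho^N_s)\rangle$, the second-order term $\tfrac12\Delta\rho^N$ coming from $\nu\nu^\top=Id$, the $\sigma\sigma^\top$ second-order term, a $B$-martingale and the idiosyncratic $W^i$-martingale, It\^o's formula yields an identity for $\|\rho^N_t\|_q^q$ built from exactly the same blocks as in \eqref{dec7second}.

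Next I would estimate term by term. As in \eqref{dec7second}, the $\sigma\sigma^\top$ drift cancels the quadratic-variation correction of the $\sigma\,dB$ martingale, and the surviving $B$-martingale vanishes because its integrand is $\int_{\mathbb{T}^d}\partial_j|\rho^N_s|^q\,dx=0$ by Remark \ref{continhas}(i) and periodicity. For the transport term I integrate by parts and use Remark \ref{continhas}(ii) to obtain $q(q-1)\int|\rho^N_s|^{q-2}\nabla\rho^N_s\cdot G^N\,dx$; since $V\ge0$ and $|F|\le L$ by $(\mathbf{A}^F)$ one has $|G^N|\le L\,\rho^N_s$ pointwise, so an $\epsilon$-Young inequality bounds it by $\epsilon\,q(q-1)\int|\rho^N_s|^{q-2}|\nabla\rho^N_s|^2\,dx+C\|\rho^N_s\|_q^q$. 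Taking $\epsilon<\tfrac12$ lets the gradient part be absorbed by the dissipation $-\tfrac{q(q-1)}{2}\int|\rho^N_s|^{q-2}|\nabla\rho^N_s|^2\,dx$ produced by $\tfrac12\Delta\rho^N$, which is then discarded. Finally the It\^o correction of the idiosyncratic martingale is treated exactly as $I_2$: a $1$-Young inequality (the corresponding constant vanishing when $q=2$) together with the scaling \eqref{taxa derivada V} of $\nabla V^N$ controls it by $C\int_0^t\|\rho^N_s\|_q^q\,ds+CN^{(-\frac q2+q\beta(1+\frac1d)-\beta)}\|\nabla V\|_q^q$, whose last summand is bounded uniformly in $N$ precisely because $\beta<\big[2(1+\tfrac1d-\tfrac1q)\big]^{-1}$.

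Collecting these bounds gives $\|\rho^N_t\|_q^q\le\|\rho^N_0\|_q^q+C\int_0^t\|\rho^N_s\|_q^q\,ds+R_N+\widetilde M^N_t$ with $\sup_N R_N<\infty$ and $\widetilde M^N$ the idiosyncratic martingale. I would then take $\sup_{t\le r}$, raise to the power $m/q\ge1$ and take expectation, handling the martingale by the Burkholder--Davis--Gundy inequality: its bracket equals $\tfrac1{N^2}\sum_i\int_0^r\big|\int q|\rho^N_s|^{q-2}\rho^N_s(\nabla V^N)(x-X^i_s)\,dx\big|^2\,ds$, which by H\"older and \eqref{taxa derivada V} is at most $C\,\tfrac{\|\nabla V^N\|_q^2}{N}\int_0^r\|\rho^N_s\|_q^{2(q-1)}\,ds$, where $\tfrac{\|\nabla V^N\|_q^2}{N}=N^{-2[\frac12-\beta(1+\frac1d-\frac1q)]}\|\nabla V\|_q^2$ stays bounded in $N$. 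Since the power of $\sup_{t\le r}\|\rho^N_t\|_q$ arising after BDG is strictly below $m$, a Young inequality turns this contribution into $\delta\,\mathbb{E}\big[\sup_{t\le r}\|\rho^N_t\|_q^m\big]+C_\delta$; absorbing the $\delta$-term on the left (legitimate by the a priori finiteness for fixed $N$) and using $\sup_N\mathbb{E}\|\rho^N_0\|_q^m<\infty$, Gr\"onwall's lemma yields a bound on $\mathbb{E}\big[\sup_{t\le T}\|\rho^N_t\|_q^m\big]$ uniform in $N$. The main obstacle is this last step: one must check that both the It\^o correction and the BDG bracket of the idiosyncratic noise remain bounded as $N\to\infty$, i.e.\ that the $1/N$ (respectively $1/N^2$) prefactor beats the blow-up of $\nabla V^N$, which is exactly what forces the stated range of $\beta$.
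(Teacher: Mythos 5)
Your proposal follows essentially the same route as the paper's proof: It\^o's formula for the $L^q$-norm applied directly to $\rho^N$, the pointwise bound $|\langle S^N_s,V^N(x-\cdot)F(\cdot,K\ast\rho^N_s)\rangle|\le L\,\rho^N_s$ coming from $V\ge 0$ and $(\mathbf{A}^F)$, $\epsilon$-Young absorption of the gradient term into the dissipation, the scaling estimate \eqref{taxa derivada V} for the It\^o correction $I_2$, and a Burkholder--Davis--Gundy plus Gr\"onwall treatment of the idiosyncratic martingale. The only (cosmetic) deviation is your handling of the martingale term --- H\"older on the bracket giving $\sup_{s\le r}\|\rho^N_s\|_q^{m(q-1)/q}$, then Young with absorption justified by the fixed-$N$ a priori bound --- whereas the paper's Lemma \ref{lemma_martingal} applies Doob's maximal inequality and Young's inequality inside the integral to produce the Gr\"onwall-compatible term $\int_0^r\mathbb{E}\big(\sup_{t\le s}\|\rho^N_t\|_q\big)^m\,ds$ directly; both versions are correct and rest on the same scaling $\|\nabla V^N\|_q^2/N = N^{-2[\frac12-\beta(1+\frac1d-\frac1q)]}\|\nabla V\|_q^2$.
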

	Finally, similar computations as those of Lemma \ref{lemma_martingal} on Appendix \ref{apA} yields
	\begin{align*}
		\mathbb{E}\left(\sup_{t\in [0,r]}\left|M^N_{t}\right| \right)^{\frac{m}{q}}
		&\lesssim    \int_{0}^{r} \mathbb{E}\left( \sup_{t \in [0,s]}\left\|\rho_t - \rho_t^N\right\|_q \right)^{m} d{s} + N^{\left(-\frac{1}{2}+\beta(1 + \frac{1 }{d}-\frac{1}{q})\right)\, m}.  \, 
	\end{align*}

	The above with  Lemma \ref{additive estimate} in (\ref{dec 61}) leads to
	\begin{align}
		\mathbb{E}\left( \sup_{t \in [0,r]}\left\|\rho_t - \rho^N_t \right\|_{q} \right)^m &\le C\,\mathbb{E}\left( \left\|\rho_0 -  \rho^N_0 \right\|_{q} \right)^m \nonumber \\
		&+C\int_{0}^{r}\mathbb{E}\left( \sup_{t \in [0,s]}\left\|\rho_t - \rho^N_t \right\|_{q} \right)^m ds\nonumber\\
		&+ C N^{\left(-\frac{1}{2}+\beta(1 + \frac{1 }{d}-\frac{1}{q})\right)\, m} \, \nonumber \\ 
		&+ C\left(N^{-\frac{\beta}{d}}\right)^m \nonumber \\
		&+ C\left(N^{-\frac{\beta}{d}\gamma}\right)^m \nonumber \\
		&+ C N^{\left(-\frac{1}{2}+\beta(1 + \frac{1 }{d}-\frac{1}{q})\right)\, m}. \, \nonumber 
	\end{align}
	Now take $r=T$. Gr\"onwall's Lemma implies
	\begin{align}
		\mathbb{E}\left( \sup_{t \in [0,T]}\left\|\rho_t - \rho^N_t \right\|_{q} \right)^m &\le C\,\mathbb{E}\left( \left\|\rho_0 -  \rho^N_0 \right\|_{q} \right)^m \nonumber \\
		&+C N^{\left(-\frac{1}{2}+\beta(1 + \frac{1 }{d}-\frac{1}{q})\right)\, m}\nonumber \\
		&+ C\left(N^{-\frac{\beta}{d}}\right)^m \nonumber \\   
		&+ C\left(N^{-\frac{\beta}{d}\gamma}\right)^m \nonumber
	\end{align}
	and since $\gamma \le 1$, we end up with
	\begin{align*}
		\Big \| \left\|\rho - \rho^N \right\|_{T,q} \Big \|_{L^m(\Omega)}  &\le C \Big \| \left\|\rho_0 - \rho^N_0 \right\|_{q} \Big \|_{L^m(\Omega)} +C N^{\left(-\frac{1}{2}+\beta(1 + \frac{1 }{d}-\frac{1}{q})\right)} + C N^{-\frac{\beta}{d} \gamma} .
		\nonumber
	\end{align*}
\end{proof}

\subsection{Proof of Theorem \ref{teo_d1}}
\begin{proof}
	Applying the Itô's formula for the $L^q$-norm of a $H_{q}^{1}$-valued process in \cite{kry} as in the proof of Theorem \ref{first main} we get
	\begin{align}
		\left\| \rho_t - \rho^N_t\right\|_{2}^{2}&=\left\| \rho_0 -\rho^N_0\right\|_{2}^{2}\nonumber+I_1 + I_2 \nonumber \\
		&- \int_{0}^{t} \int_{\mathbb{T}} \left| \nabla (\rho_s - \rho^N_s)\right|^{2} \, dx \, ds\nonumber \\
		& -\sum_{j,k=1}^d\int_0^t\sigma^{jk}_s \underbrace{\int_{\mathbb{T}} \nabla|\rho_s- \rho_s^N|^{2}\,dx }_{=0}d  B_s^k\nonumber \\
		&-M_{t}^N ,  \label{dec7}
	\end{align}
	\noindent where 
	$$I_1 \doteq 2\int_{0}^{t}\int_{\mathbb{T}}\nabla (\rho_s -\rho_s^N)\left[\rho_sF\left(x, \rho_s\right)  -\left\langle S_{s}^{N}, V^{N}(x-\cdot)F\left(\cdot,  \rho^N_s(\cdot)\right)\right\rangle \right]\,dx\,ds, $$
	$$ I_2 \doteq \frac{1}{ N^{2}} \sum_{i=1}^{N} \int_{0}^{t}\int_{\mathbb{T}}  \left| \left(\nabla V^N\right)(x-X_{s}^{i})\right|^{2} \, dx \, ds,$$
	and
	$$M^N_{t} \doteq  \frac{2}{N}\sum_{i=1}^N\int_0^t\int_{\mathbb{T}}(\rho_s - \rho_s^N)\left(\nabla V^N\right) (x - X_s^{i}) \, dx \, dW_s^{i} .$$
	
	By a change of variables  in $I_2$ and the estimate \eqref{taxa derivada V} we obtain
	\[ I_2 \le C N^{(-1+3\beta)}  \, \|\nabla V\|_{2}^2. \]

	Now, for $I_1$, we use that $F$ is Lipschitz $(\mathbf{A}^F)$ and the estimate provided by Lemma \ref{additive estimate d=2} on $\rho_s^N$ combined with the Sobolev embedding  $H_2^1(\R) \hookrightarrow C^{\frac{1}{2}}(\R)$.
	\begin{align*}
		\left| \left\langle S_s^{N}, \right. \right. & \left. \left. V^{N}(x - \cdot)\left(F\left(x, \rho_s^{N}(x)\right)-F(\cdot, \rho_s^{N}(\cdot))\right) \right \rangle \right| \\
		& \le\left\langle S_s^{N}, V^{N}(x-\cdot)\left|F\left(x, \rho_s^{N}(x)\right)-F\left(\cdot, \rho_s^{N}(\cdot)\right)\right|\right\rangle \\
		&\le L\left\langle S_s^{N}, V^{N}(x-\cdot)\left(|x - \cdot|+\left| \rho_s^{N}(x)-\rho_s^{N}(\cdot)\right|\right) \right\rangle \\
		& \le L\left\langle S_s^{N}, V^{N}(x-\cdot)\left(|x - \cdot|+\left\|\rho_s^{N}\right\|_{\frac{1}{2}}|x-\cdot|^{\frac{1}{2}}\right)\right\rangle \\
		& \le L\left(N^{-\beta}+\left\|\rho_s^{N}\right\|_{\frac{1}{2}} N^{-\frac{\beta}{2} }\right)\left|\rho_s^{N}(x)\right|
	\end{align*}
	where in the last inequality we are using that $\text{supp}\, V \subset \left\{x \in \mathbb{R}^d\mid |x|< \frac{1}{2}\right\}$. 
	
	In the same way, we add and subtract the terms $F(x, \rho_s^N)\rho_s$ and $F(x, \rho_s^N)\rho_s^N$ to obtain
	\begin{align*}
		&\left|F(x, \rho_s) \rho_s-\langle S_s^N, V^N(x-\cdot) F(\cdot, \rho_s^N)\rangle \right| \\
		&\quad \le \left|F(x, \rho_s) \rho_s- F(x, \rho_s^N) \rho_s\right| + \left|F(x, \rho_s^N) \rho_s- F(x, \rho_s^N) \rho_s^N\right| \\
		&\quad + \langle S_s^N, V^N(x-\cdot)| F(x, \rho_s^N(x)) - F(\cdot, \rho_s^N(\cdot))|\rangle \\
		&\quad\le L\left|\rho_s - \rho_s^N\right||\rho_s| + L\left|\rho_s - \rho_s^N\right| +  L\left(N^{-\beta}+\left\|\rho_s^{N}\right\|_{\frac{1}{2}} N^{-\frac{\beta}{2} }\right)\left|\rho_s^{N}\right|. 
	\end{align*}
	Now by Theorem $8.8$ in \cite{Brezis}, for all $p \geq 1$,
	\begin{align*}
		\left\| \, \cdot \,   \right\|_{\infty} \le \| \cdot \|_{p} + \|\nabla \cdot\|_{p}.
	\end{align*}
	We observe that  $ \rho \in  L^2H_2^1(\mathbb{T})\hookrightarrow L^2C^{\frac{1}{2}}(\mathbb{T})$, a.s.. Then 
	a.e. in $s \in [0,T]$, taking $p=1$, we have, a.s.
	\begin{align*}
		\|(\rho_s - \rho_s^N)^2\|_{\infty} \le \|(\rho_s - \rho_s^N)^2\|_1 + 2\|(\rho_s - \rho_s^N)\nabla(\rho_s - \rho_s^N)\|_{1}
	\end{align*}
	and thus by Holder's inequality,
	\begin{align*}
		\|\rho_s - \rho_s^N\|_{\infty}^2 \leq  \|\rho_s - \rho_s^N\|_{2}^2 + 2\|\rho_s - \rho_s^N\|_{2}\|\nabla(\rho_s - \rho_s^N)\|_{2} .
	\end{align*}
	
	By $\epsilon$-Young inequality we have 
	\begin{align*}
		& \int_{\mathbb{T}}  |\nabla (\rho_s - \rho_s^N)| \left| \rho_s - \rho_s^N \right||\rho_s| \, dx \\ &\le \, \epsilon \, \|\nabla(\rho_s - \rho_s^N)\|_{2}^2  + C_\epsilon \, \|\rho_s\|_{2}^2 \, \|\rho_s-\rho_s^N\|_{\infty}^2\\
		&\leq 2\epsilon \, \|\nabla(\rho_s - \rho_s^N)\|_{2}^2 + C_\epsilon \, \big(\|\rho_s\|_{2}^2+ 4C_\epsilon \|\rho_s\|_{2}^4 \big)\, \|\rho_s-\rho_s^N\|_{2}^2.
	\end{align*}
	
	By denoting $\hat{C}_N \doteq N^{-\beta}+\left\|\rho_s^{N}\right\|_{\frac{1}{2}} N^{-\frac{\beta}{2} }$,  the $\epsilon$-Young inequality yields
	\begin{align*}
		&|\nabla (\rho_s-\rho_s^N)||\rho_s-\rho_s^N|+\hat{C}_N|\nabla (\rho_s-\rho_s^N)||\rho_s^N| \\
		&\le 2\epsilon\, |\nabla (\rho_s-\rho_s^N)|^2 + C_\epsilon \, |\rho_s-\rho_s^N|^2 + C_\epsilon \, \hat{C}_N^2\,|\rho_s^N|^2.
	\end{align*}
	
	Therefore,  we obtain an estimate for $I_1$:
	\begin{align*}
		|I_1|&\le 2 L \int_0^t \int_{\R} |\nabla(\rho_s-\rho_s^N)| \Big(|\rho_s-\rho_s^N||\rho_s| + |\rho_s-\rho_s^N|+ \hat{C}_N|\rho_s^N|\Big) \, dx\, ds \\
		&\le \epsilon 2L\int_0^t \|\nabla (\rho_s - \rho_s^N) \|_2^2  \, ds + \, C_\epsilon \, 2L\int_0^t\big(\|\rho_s\|_{2}^2+ 4C_\epsilon \|\rho_s\|_{2}^4 \big) \|\rho_s - \rho_s^N\|_2^{2}  \, ds \\
		& +  4\epsilon L \int_0^t \|\nabla (\rho_s - \rho_s^N)\|_2^2  \,ds + C_{\epsilon} 2L \int_0^t \| \rho_s - \rho_s^N \|_2^2  \,ds   \\
		&+ C_{\epsilon}4L \, \left(N^{-2\beta}+\int_{0}^T\left \|\rho_s^{N}\right\|_{{\frac{1}{2}}}^2 N^{-2\frac{\beta}{2} }\, ds\right)  T\,\| \rho^N \|_{T,2}^2. \,
	\end{align*}
	Finally, taking  $\epsilon < \frac{1}{6L}$,  the previous estimates lead to
	\begin{align}
		\left\|\rho_t - \rho^N_t \right\|_2^{2}   &\le \left\|\rho_0 - \rho^N_0 \right\|_2^{2} + C\int_0^t (C + \|\rho\|_{T,2}^2+\|\rho\|_{T,2}^4) \|\rho_s - \rho_s^N\|_2^{2}\, ds \nonumber\\
		&+ C N^{\left(-1+3\beta\right)} \nonumber\\
		&+ C\left(N^{-2\beta}+\int_{0}^T\left \|\rho_s^{N}\right\|_{{\frac{1}{2}}}^2 N^{-\beta}\, ds\right)  \| \rho^N \|_{T,2}^2 \nonumber\\
		&+|M^N_{t}|.  \nonumber
	\end{align}

	Since $\Big\|\left\|\rho\right\|_{T,2}\Big\|_{L^{\infty}(\Omega)} < \infty$, by Jensen's inequality with $| \cdot |^{\frac{m}{2}}$, $\frac{m}{2} > 1$  we have for all $ r \in [0,T]$ 
	\begin{align}
		\mathbb{E}\left( \sup_{t \in [0,r]}\left\|\rho_t - \rho^N_t \right\|_{2} \right)^m &\le C\, \mathbb{E}\left( \left\|\rho_0 -  \rho^N_0 \right\|_{2} \right)^m \nonumber \\
		&+C\int_{0}^{r}\mathbb{E}\left( \sup_{t \in [0,s]}\left\|\rho_t - \rho^N_t \right\|_{2} \right)^m ds\nonumber\\
		&+ C N^{\left(-1+3\beta\right)\frac{m}{2}}  \,  \nonumber\\
		&+ C\left(N^{-\beta}\right)^m\mathbb{E}\| \rho^N \|_{T,2}^m \nonumber \\
		&+ C\left( N^{-\frac{\beta}{2}} \right)^m
		\mathbb{E}\left(\left \| \rho^N \right\|_{T,2}\left\|\rho^N\right\|_{
			L^2C^{\frac{1}{2}}} \right)^m  \nonumber \\&+E\left(\sup_{t\in [0,r]}|M^N_{t}| \right)^{\frac{m}{2}}. \label{dec 6}
	\end{align}
	Note that for all $ m \geq 1$,
	\begin{align}
		\sup_N\Big \| \left\|\rho^N\right\|_{L^2C^\frac{1}{2}} \,  \Big \|_m  
		\nonumber \lesssim
		\sup_N\Big \| \left\|\rho^N\right\|_{L^2H_2^1} \,  \Big \|_m  
	\end{align}
	where we use that $ L^2H_2^1(\mathbb{T})\hookrightarrow L^2C^{\frac{1}{2}}(\mathbb{T})$.  
	We now need a uniform in $N \in \mathbb{N}$ estimate for the regularized empirical measure, whose proof is given in the Appendix \ref{apA}.
	\begin{lem}\label{additive estimate d=2}
		Under the hypotheses of Theorem \ref{teo_d1}, we have the following:
		\begin{align*}
			\sup_{N \in \mathbb{N}}\Big \| \left\|\rho^N \right\|_{L^2H_2^1} \Big \|_m  & < \infty.  \end{align*} 
	\end{lem}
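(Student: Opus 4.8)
The plan is to run an energy estimate for $\|\rho^N_t\|_2^2$ obtained by applying the $H^1_2$-valued It\^o formula of \cite{kry} to the evolution (\ref{Ndensity_aditivo}) of the mollified empirical measure (specialized to $d=1$, $\nu = Id$, $K=\delta_0$, and $\sigma$ a scalar function of time), and then to close a stochastic Gr\"onwall argument in $L^m(\Omega)$. \textbf{Step 1 (It\^o expansion).} Computing $2\langle \rho^N_s, \mathrm{d}\rho^N_s\rangle$ together with the quadratic variation produces five contributions. The Laplacian $\tfrac{1}{2}\partial_{xx}\rho^N$ yields the dissipative term $-\int_0^t \|\partial_x \rho^N_s\|_2^2\, ds$. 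The transport-noise terms cancel: the It\^o correction $+\int_0^t \sigma_s^2 \|\partial_x\rho^N_s\|_2^2\, ds$ exactly compensates the drift $\int_0^t\langle \rho^N_s, \sigma_s^2 \partial_{xx}\rho^N_s\rangle\, ds$, while the stochastic integral against $\mathrm{d}B_s$ vanishes because $\langle \rho^N_s, \partial_x \rho^N_s\rangle = \tfrac{1}{2}\int_{\mathbb{T}}\partial_x|\rho^N_s|^2\, dx = 0$. There remain the drift $J$ involving $F$, the idiosyncratic It\^o correction $I_2$ (the very same quantity as in the proof of Theorem \ref{teo_d1}), and a genuine martingale $\widetilde{M}^N_t = -\tfrac{2}{N}\sum_{i=1}^N\int_0^t \langle \rho^N_s, \partial_x V^N(\cdot - X^i_s)\rangle\, \mathrm{d}W^i_s$.

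\textbf{Step 2 (estimating each piece).} For $J$ I integrate by parts to move $\partial_x$ onto $\rho^N_s$ and then apply $\epsilon$-Young, reducing it to $\epsilon\|\partial_x\rho^N_s\|_2^2 + C_\epsilon\int_{\mathbb{T}}|\langle S^N_s, V^N(x-\cdot)F(\cdot,\rho^N_s)\rangle|^2\, dx$; since $|F|\le L$ by $(\mathbf{A}^F)$ and $V^N\ge 0$, the inner product is dominated by $L\,\rho^N_s(x)$, so $J\le \epsilon\|\partial_x\rho^N_s\|_2^2 + C_\epsilon L^2\|\rho^N_s\|_2^2$, and choosing $\epsilon<1$ absorbs the gradient into the dissipation. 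A change of variables and estimate (\ref{taxa derivada V}) give $I_2\le C N^{-1+3\beta}$, uniformly bounded since $\beta<\tfrac{1}{3}$. Finally, Cauchy--Schwarz together with $\|\partial_x V^N\|_2^2 = N^{3\beta}\|V'\|_2^2$ yields
\[ \langle \widetilde{M}^N\rangle_r \le \frac{C}{N^2}\sum_{i=1}^N \int_0^r \|\rho^N_s\|_2^2\, \|\partial_x V^N\|_2^2\, ds \le C\, N^{-1+3\beta}\int_0^r \|\rho^N_s\|_2^2\, ds \le C\int_0^r \|\rho^N_s\|_2^2\, ds, \]
again using $\beta<\tfrac{1}{3}$.

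\textbf{Step 3 (stochastic Gr\"onwall).} Collecting the estimates gives, for every $r\in[0,T]$,
\[ \sup_{t\in[0,r]}\|\rho^N_t\|_2^2 + (1-\epsilon)\int_0^r \|\partial_x\rho^N_s\|_2^2\, ds \le \|\rho^N_0\|_2^2 + C\int_0^r \|\rho^N_s\|_2^2\, ds + C + \sup_{t\in[0,r]}|\widetilde{M}^N_t|. \]
Dropping the non-negative gradient term, raising to the power $m/2$, taking expectations, and using the Burkholder--Davis--Gundy inequality with the bound on $\langle\widetilde{M}^N\rangle$ from Step 2 plus Jensen's inequality, every term except the initial datum and $\int_0^r \mathbb{E}[\sup_{u\le s}\|\rho^N_u\|_2^m]\, ds$ gets absorbed. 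Since $\sup_N \big\|\,\|\rho^N_0\|_2\,\big\|_{L^m(\Omega)}<\infty$ by hypothesis, Gr\"onwall's lemma yields $\sup_N \mathbb{E}[\sup_{t\le T}\|\rho^N_t\|_2^m]<\infty$. Feeding this back into the retained gradient term bounds $\sup_N \mathbb{E}\big[(\int_0^T \|\partial_x\rho^N_s\|_2^2\, ds)^{m/2}\big]$, and since $\|\rho^N\|_{L^2H^1_2}^2 = \int_0^T(\|\rho^N_t\|_2^2 + \|\partial_x\rho^N_t\|_2^2)\, dt \le T\sup_t\|\rho^N_t\|_2^2 + \int_0^T\|\partial_x\rho^N_t\|_2^2\, dt$, the claim follows after taking the $m/2$-th moment.

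The main obstacle is controlling the two idiosyncratic-noise contributions, namely the correction $I_2$ and the quadratic variation of $\widetilde{M}^N$, both of which carry the diverging factor $\|\partial_x V^N\|_2^2\sim N^{3\beta}$; it is precisely the restriction $\beta<\tfrac{1}{3}$ that keeps the prefactor $N^{-1+3\beta}$ bounded and lets the Gr\"onwall loop close. A secondary technical point is justifying the transport-noise cancellation and the rigorous application of the $H^1_2$-valued It\^o formula.
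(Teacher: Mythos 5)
Your proof is correct and follows essentially the same route as the paper: an It\^o expansion of $\|\rho^N_t\|_2^2$ with the transport-noise cancellation, the $\|F\|_\infty$ plus $\epsilon$-Young bound on the drift, the rate \eqref{taxa derivada V} giving $I_2\lesssim N^{-1+3\beta}$, a BDG/Gr\"onwall loop for $\sup_{t\le T}\|\rho^N_t\|_2$ in $L^m(\Omega)$, and then feeding that bound back into the retained dissipation term to control $\int_0^T\|\partial_x\rho^N_s\|_2^2\,ds$, exactly as the paper does by continuing from \eqref{dec7prima} with $\epsilon=\tfrac14$ after establishing Lemma~\ref{additive estimate}. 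The only cosmetic difference is your martingale estimate (Cauchy--Schwarz on the quadratic variation followed by BDG) in place of the paper's Lemma~\ref{lemma_martingal} (Doob's maximal inequality, BDG, and a Young splitting), which is an equivalent standard variant at $q=2$.
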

	Finally, similar computations as those of Lemma \ref{lemma_martingal} on Appendix \ref{apA} yield
	\begin{align*}
		\mathbb{E}\left(\sup_{t\in [0,r]}\left|M^N_{t}\right| \right)^{\frac{m}{2}}
		&\lesssim    \int_{0}^{r} \mathbb{E}\left( \sup_{t \in [0,s]}\left\|\rho_t - \rho_t^N\right\|_2 \right)^{m} d{s} + N^{\left(-1+3\beta\right)\frac{m}{2}}.  \, 
	\end{align*}
	From  (\ref{dec 6}), and  Lemmas \ref{additive estimate} and \ref{additive estimate d=2}  we conclude 
	\begin{align}
		\mathbb{E}\left( \sup_{t \in [0,r]}\left\|\rho_t - \rho^N_t \right\|_{2} \right)^m &\le C\, \mathbb{E}\left( \left\|\rho_0 -  \rho^N_0 \right\|_{2} \right)^m \nonumber \\
		&+C\int_{0}^{r}\mathbb{E}\left( \sup_{t \in [0,s]}\left\|\rho_t - \rho^N_t \right\|_{2} \right)^m ds\nonumber\\
		&+ C N^{\left(-1+3\beta\right)\frac{m}{2}} \, \nonumber \\ 
		&+ C\left(N^{-\beta}\right)^m \nonumber \\
		&+ C\left( N^{-\frac{\beta}{2}} \right)^m
		\nonumber\\
		&+ CN^{\left(-1+3\beta\right)\frac{m}{2}}. \nonumber
	\end{align}
	Then taking $r=T$, Gr\"onwall's Lemma implies
	\begin{align}
		\mathbb{E}\left( \sup_{t \in [0,T]}\left\|\rho_t - \rho^N_t \right\|_{2} \right)^m &\le C\mathbb{E}\left( \left\|\rho_0 -  \rho^N_0 \right\|_{2} \right)^m \nonumber \\
		&+ C N^{\left(-1+3\beta\right)\frac{m}{2}} + C\left( N^{-\frac{\beta}{2}} \right)^m
		\nonumber
	\end{align}
	and 
	\begin{align*}
		\Big \| \left\|\rho - \rho^N \right\|_{T,2} \Big \|_m  &\le C \Big \| \left\|\rho_0 - \rho^N_0 \right\|_{2} \Big \|_m \nonumber \\
		&+ C N^{\left(-1+3\beta\right)\frac{1}{2}} + C N^{-\frac{\beta}{2}} .
		\nonumber
	\end{align*}
	\end{proof}
	
	\appendix

	\section{Uniform in $N\in\mathbb{N}$ estimates for the regularized empirical measure} \label{apA}
	This section presents the proofs of Lemmas \ref{additive estimate} and \ref{additive estimate d=2}.
	\begin{proof}[Proof of Lemmas \ref{additive estimate} and \ref{additive estimate d=2}]
	
	Applying the It\^o's formula on $\rho_t^N$  for the function $x \mapsto |x|^q$, $q\geq2$, (essentially \eqref{dec7second} with $\rho=0$), Fubini's theorem and  its  stochastic version, we get

\begin{align}
\left\| \rho^N_t\right\|_{q}^{q}&=\left\| \rho^N_0\right\|_{q}^{q}\nonumber+I_1 + I_2\nonumber \\
&-\frac{1}{2} \int_{0}^{t} \int_{\mathbb{T}^{d}} q(q-1)\left| \rho^N_s\right|^{q-2}\left| \nabla \rho^N_s\right|^{2} \, dx \, ds\nonumber \\ &-M_{t}^N   \label{dec7prima}
\end{align}
\noindent where 
$$I_1 \doteq  q(q-1)\int_{0}^{t}\int_{\mathbb{T}^d}|\rho_s^N|^{q-2}\nabla \rho_s^N\left\langle S_{s}^{N},  V^{N}(x-\cdot) F\left(\cdot, K\ast \rho^N_s(\cdot)\right)\right\rangle dx \, ds, $$
$$ I_2 \doteq \frac{1}{2 N^{2}} \sum_{i=1}^{N} \int_{0}^{t}\int_{\mathbb{T}^{d}}  q(q-1)\left| \rho_s^N\right|^{q-2} \left| \left(\nabla V^N\right)(x-X_{s}^{i})\right|^{2} \, dx \, ds,$$
and
$$M^N_{t} \doteq \frac{1}{N} \sum_{i=1}^{N}\int_{0}^{t} \int_{\mathbb{T}^d}q\left|\rho_s^N\right|^{q-1}   \left(\nabla V^N\right)(x-X_{s}^{i}) \, dx\,dW_{s}^{i}.$$

If $q >2$ by Young inequality,  with $\frac{1}{\frac{q}{q-2}}+\frac{1}{\frac{q}{2}}=1$,  we obtain
\begin{align}
&I_2 = \frac{q(q-1)}{2 N} \sum_{i=1}^{N} \int_{0}^{t}\hspace{-3px}\int_{\mathbb{T}^{d}}  \left| \rho_s^N\right|^{q-2} \frac{1}{N}\left| \left(\nabla V^N\right)(x-X_{s}^{i})\right|^{2} \, dx \, ds \label{I2_tipo_I5} \\
&\lesssim \frac{q(q-1)}{2 N} \sum_{i=1}^{N} \int_{0}^{t}\left\| \rho_s^N\right\|_{q}^{q} ds+\frac{q(q-1)}{2 N} \sum_{i=1}^{N} \int_{0}^{t}\hspace{-3px} \int_{\mathbb{T}^{d}}\frac{1}{N^{\frac{q}{2}}}\left|\left( \nabla V^{N}\right)(x-X_{s}^{i})\right|^{q} d x \, d s.   \nonumber
\end{align}
Now for $q\geq 2$, by a change of variables we have 
\begin{align}
\nonumber \int_{0}^{t} \int_{\mathbb{T}^{d}}\left| \left(\nabla V^{N}\right)(x)\right|^{q} d x \, d s  &\le T \int_{\mathbb{R}^{d}} \left| \nabla \left[N^{\beta}V\left(N^{\beta/d}\cdot \right)\right](x)\right|^{q} d x \nonumber \\
&= T \int_{\mathbb{R}^{d}} \left| N^{\beta(1 + \frac{1 }{d})}(\nabla V)\left(N^{\beta/d}x \right)\right|^{q} d x \nonumber \\
&= T  N^{q\beta(1 + \frac{1 }{d})}\int_{\mathbb{R}^{d}} \left|(\nabla V)\left(N^{\beta/d}x \right)\right|^{q} d x \nonumber\\
&= T  N^{q\beta(1 + \frac{1 }{d})}\int_{\mathbb{R}^{d}} \left|(\nabla V)\left(N^{\beta/d}x \right)\right|^{q}\frac{N^\beta}{N^\beta} d x\nonumber \\
&= T  N^{q\beta(1 + \frac{1 }{d})}\int_{\mathbb{R}^{d}} \left|(\nabla V)\left(y \right)\right|^{q}\frac{1}{N^\beta} d y\nonumber \\
& = T N^{\left(q\beta(1 + \frac{1 }{d})-\beta\right)} \int_{\mathbb{R}^{d}}  \, \left| \nabla V(y)\right|^{q} dy \nonumber\\
& = T N^{\left(q\beta(1 + \frac{1 }{d})-\beta\right)}  \, \|\nabla V\|_{q}^q. \label{taxa derivada V}
\end{align}
Then 
\begin{align*}
I_2 &\le C_2\int_{0}^{t}\left\|  \rho^N_s\right\|_{q}^{q} d s+\frac{q(q-1)}{2}T N^{\left(-\frac{q}{2}+q\beta(1 + \frac{1 }{d})-\beta\right)}  \, \|\nabla V\|_{q}^q
\end{align*}
where $C_2 =\frac{q(q-1)}{2}$, if $q > 2$ and $C_2=0$, if $q=2$.

Now, we observe 
\begin{align*}
\left| \left\langle S_{s}^{N}, V^{N}(x-\cdot) F\left(\cdot, K\ast \rho_s^N\right)\right\rangle \right|&\le  \left\langle S_{s}^{N},\left|V^{N}(x-\cdot)F\left(\cdot, K\ast \rho_s^N\right)   \right|\right\rangle \\
&\stackrel{V \ge 0}{\le} \|F\|_{\infty} \left|\left\langle S_{s}^{N}, V^{N}(x-\cdot) \right\rangle\right| \\
&= \|F\|_{\infty} |\rho_s^N |.
\end{align*}

Then we deduce 
\begin{align*}
I_1 &= \int_{0}^{t} \int_{\mathbb{T}^d}   q(q-1) \left| \rho_s^N\right|^{q-2} \nabla \rho_s^N  \,  \left\langle S_{s}^{N}, V^{N}(x-\cdot) F\left(\cdot, K\ast \rho_s^N\right)\right\rangle dx\,  ds\\
&\le \| F\|_{\infty} \int_{0}^{t} \int_{\mathbb{T}^d}   q(q-1) \left| \rho_s^N\right|^{q-2} | \nabla \rho_s^N | \,  |\rho_s^N| \, dx \, d s.
\end{align*}

By $\epsilon$-Young inequality with $\frac{1}{2}+\frac{1}{2}=1$ we get 
\begin{align}
I_1 &\le C_{\epsilon} q(q-1)\|F\|^2_{\infty}\int_{0}^{t}\left\|  \rho^N_s\right\|_{q}^{q} ds \nonumber \\
&+\epsilon \int_{0}^{t} \int_{\mathbb{T}^d}   q(q-1) \left| \rho_s^N\right|^{q-2} | \nabla \rho_s^N |^2 \,  dx \,ds. \label{I1}
\end{align}

Thus, we join the estimates with $\epsilon < \frac{1}{2}$ and continue from \eqref{dec7prima}:
\begin{align}
\left\| \rho^N_t \right\|_q^{q}   &\le \left\| \rho^N_0 \right\|_q^{q} \nonumber \\
&+\left(C_{\epsilon}q(q-1)\|F\|_{\infty}^2 + \frac{q(q-1)}{2} 1_{[q>2]}\right)\int_{0}^{t}\left\| \rho_s^N\right\|_{q}^{q} ds\nonumber\\
&+ \frac{q(q-1)}{2}T N^{\left(-\frac{q}{2}+q\beta(1 + \frac{1 }{d})-\beta\right)} \, \|\nabla V\|_{q}^q \nonumber \\ 
&+\left|M^N_{t}\right|. \label{eps}
\end{align}

Taking the supremum, applying Jensen's inequality, and then taking the expectation, we obtain: 
\begin{align}
\mathbb{E}\left( \sup_{t \in [0,r]}\left\| \rho^N_t \right\|_q \right)^m &\le C\mathbb{E}\left( \left\| \rho^N_0 \right\|_q \right)^m \nonumber \\
&\hspace{-45px}+C\left(C_{\epsilon}q(q-1)\|F\|_{\infty}^2 + \frac{q(q-1)}{2} 1_{[q>2]}\right)^{\frac{m}{q}}\int_{0}^{r}\mathbb{E}\left( \sup_{t \in [0,s]}\left\| \rho^N_t \right\|_q \right)^m ds\nonumber\\
&\hspace{-45px}+ C\Big(\frac{q(q-1)}{2}T\Big)^{\frac{m}{q}} N^{\left(-\frac{1}{2}+\beta(1+\frac{1}{d} -\frac{1}{q})\right)m}  \, \|\nabla V\|_{q}^m \nonumber \\ 
&\hspace{-45px}+C\mathbb{E}\left(\sup_{t\in [0,r]}\left|M^N_{t}\right| \right)^{\frac{m}{q}}. \label{dec 6prima}
\end{align}
Finally,  from (\ref{dec 6prima}) and  lemma \ref{lemma_martingal}  we deduce 
\begin{align}
\mathbb{E}\left( \sup_{t \in [0,r]}\left\| \rho^N_t \right\|_q \right)^m &\le  C\mathbb{E}\left( \left\| \rho^N_0 \right\|_q \right)^m +C\int_{0}^{r}\mathbb{E}\left( \sup_{t \in [0,s]}\left\| \rho^N_t \right\|_q \right)^m ds\nonumber\\
&+ C\left(N^{\left(-\frac{1}{2}+\beta(1+\frac{1}{d} -\frac{1}{q})\right)m} +N^{\left(-\frac{1}{2}+\beta(1 + \frac{1 }{d}-\frac{1}{q})\right)m}  \right). \nonumber
\end{align}
Taking  $r=T$ and  Gr\"onwall's Lemma, we conclude that 
\begin{align}
\mathbb{E}\left( \sup_{t \in [0,T]}\left\| \rho^N_t \right\|_q \right)^m &\le  C\mathbb{E}\left( \left\| \rho^N_0 \right\|_q \right)^m \nonumber + C N^{\left(-\frac{1}{2}+\beta(1+\frac{1}{d} -\frac{1}{q})\right)m}  \nonumber
\end{align}
and 
\begin{align*}
\Big \| \left\|\rho^N \right\|_{T,q} \Big \|_m  &\le C \Big \| \left\|\rho^N_0 \right\|_{q} \Big \|_m \nonumber + CN^{-\frac{1}{2}+\beta(1+\frac{1}{d} -\frac{1}{q})}.  \nonumber 
\end{align*}

\noindent This proves Lemma \ref{additive estimate}.
In order to get a proof of  Lemma \ref{additive estimate d=2}, considering the above estimates, and continuing from \eqref{dec7prima} with $\epsilon=\frac{1}{4}$, instead of \eqref{eps}, we obtain

\begin{align*}
\left\| \rho^N_t \right\|_2^{2}   +\frac{1}{2} \int_{0}^{t}  \left\| \nabla \rho^N_s\right\|_2^{2} \, ds &\le \left\| \rho^N_0 \right\|_2^{2} +C\int_{0}^{t}\left\| \rho_s^N\right\|_{2}^{2} ds\\
&+ T N^{\left(-1+2\beta(1 + \frac{1 }{d})-\beta\right)} \, \|\nabla V\|_{2}^2.  \\ 
&+\left|M^N_{t}\right|.
\end{align*}

\noindent  From  Lemma \ref{additive estimate} we have  $\sup_{N \in \mathbb{N}}\mathbb{E}\|\rho^N\|_{L^2L^2}   <\infty$.  Now we estimate  $\|\nabla \rho^N\|_2$.
By Jensen's inequality, with $| \cdot |^{\frac{m}{2}}$ and  $m > 2$,   we have 
\begin{align}
\mathbb{E}\left(\int_{0}^{r}  \left\| \nabla \rho^N_s\right\|_2^{2} \, ds\right)^{\frac{m}{2}} \nonumber &\le  C\Big(1+E \left( \left\| \rho^N_0 \right\|_2 \right)^m \Big)\nonumber \\
&+C \int_{0}^{r}\mathbb{E}\left( \sup_{t \in [0,s]}\left\| \rho^N_t \right\|_2 \right)^m ds\nonumber\\
&+C\mathbb{E}\left(\sup_{t\in [0,r]}\left|M^N_{t}\right| \right)^{\frac{m}{2}}. \nonumber
\end{align}

From  Lemma \ref{lemma_martingal}  we conclude

\begin{align}
\sup_N \left(\mathbb{E}\left(\int_{0}^{T}  \left\| \nabla \rho^N_s\right\|_2^{2} \, ds\right)^{\frac{m}{2}} \right)^{\frac{1}{m}}\nonumber &\lesssim  1 + \sup_N \Big \| \left\| \rho^N_0 \right\|_2\Big \|_m \nonumber \\
&\quad + T \sup_N \Big \| \left\| \rho^N \right\|_{T,2} \Big \|_m < \infty. \nonumber
\end{align}

\end{proof}

\begin{lem}\label{lemma_martingal} Under the assumptions of Theorem \ref{first main} or \ref{teo_d1}, it holds that
\begin{align*}
\mathbb{E}\left(\sup_{t\in [0,r]}\left|M^N_{t}\right| \right)^{\frac{m}{q}}
&\lesssim    \int_{0}^{r} \mathbb{E}\left( \sup_{t \in [0,s]}\| \rho_t^N\|_q \right)^{m} d{s} + N^{\left(-\frac{1}{2}+\beta(1 + \frac{1 }{d}-\frac{1}{q})\right)m}.
\end{align*}
\end{lem}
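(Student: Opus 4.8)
The plan is to treat $M^N_t$ as a continuous (local) martingale and control its running supremum through its quadratic variation by the Burkholder--Davis--Gundy (BDG) inequality, after which a carefully tuned Young split converts the scaling of $\nabla V^N$ from \eqref{taxa derivada V} into precisely the rate $N^{(-\frac12+\beta(1+\frac1d-\frac1q))m}$ while leaving the density in a Gr\"onwall-ready time integral. Concretely, since the $W^i$ are independent and $\mathbb{R}^d$-valued, BDG with exponent $m/q$ gives
\[
\mathbb{E}\Big(\sup_{t\in[0,r]}|M^N_t|\Big)^{\frac{m}{q}}\lesssim \mathbb{E}\,\langle M^N\rangle_r^{\frac{m}{2q}},\qquad
\langle M^N\rangle_r=\frac{q^2}{N^2}\sum_{i=1}^N\sum_{k=1}^d\int_0^r\Big(\int_{\mathbb{T}^d}|\rho^N_s|^{q-1}\partial_kV^N(x-X^i_s)\,dx\Big)^2 ds .
\]

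Next I would estimate the bracket. H\"older's inequality with the conjugate pair $\big(\tfrac{q}{q-1},q\big)$ bounds each inner spatial integral by $\big\||\rho^N_s|^{q-1}\big\|_{q/(q-1)}\,\|\partial_kV^N(\cdot-X^i_s)\|_q=\|\rho^N_s\|_q^{q-1}\|\partial_kV^N\|_q$, where translation invariance on the torus removes the dependence on $X^i_s$. Summing over the $N$ particles then contributes a single factor $N^{-1}$, and substituting the scaling identity \eqref{taxa derivada V} for $\|\nabla V^N\|_q$ yields
\[
\langle M^N\rangle_r\le C\,N^{-1+2\beta(1+\frac1d-\frac1q)}\int_0^r\|\rho^N_s\|_q^{2(q-1)}\,ds .
\]

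The decisive step is the exponent bookkeeping. Working pointwise in $\omega$, I would apply Young's inequality with conjugate exponents $q$ and $\tfrac{q}{q-1}$ to the product of $N^{(-1+2\beta(1+\frac1d-\frac1q))\frac{m}{2q}}$ and $\big(\int_0^r\|\rho^N_s\|_q^{2(q-1)}ds\big)^{\frac{m}{2q}}$: the first factor reproduces exactly $N^{(-\frac12+\beta(1+\frac1d-\frac1q))m}$, and the second becomes $\big(\int_0^r\|\rho^N_s\|_q^{2(q-1)}ds\big)^{\frac{m}{2(q-1)}}$. A Jensen inequality in time (legitimate once $\tfrac{m}{2(q-1)}\ge1$) bounds the latter by $C\int_0^r\|\rho^N_s\|_q^{m}\,ds\le C\int_0^r\big(\sup_{t\le s}\|\rho^N_t\|_q\big)^{m}ds$; taking expectations and absorbing $T$-powers into $C$ delivers the claimed inequality. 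The residual regime $m<2(q-1)$ follows by monotonicity of the $L^m(\Omega)$-norms on the probability space.

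The main obstacle is arithmetical rather than conceptual: the BDG power $m/(2q)$, the factor $|\rho^N|^{q-1}$ inherited from It\^o's formula for $\|\cdot\|_q^q$, and the scaling $\|\nabla V^N\|_q^q\sim N^{q\beta(1+\frac1d)-\beta}$ must fit together so that the Young split with the \emph{specific} conjugates $\big(q,\tfrac{q}{q-1}\big)$ simultaneously produces the stated rate and leaves a density term of power exactly $m$ under a single time integral; any other allocation of the exponents would either degrade the rate $N^{(-\frac12+\beta(1+\frac1d-\frac1q))m}$ or produce a power of $\|\rho^N_s\|_q$ incompatible with the subsequent Gr\"onwall argument in \eqref{dec 6prima}.
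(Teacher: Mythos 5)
Your proof is correct and is essentially the paper's own argument: both rest on a Burkholder--Davis--Gundy reduction to the quadratic variation, the scaling estimate \eqref{taxa derivada V}, a split via the conjugate pair $\big(q,\tfrac{q}{q-1}\big)$, and Jensen in time, producing exactly the stated rate $N^{(-\frac12+\beta(1+\frac1d-\frac1q))m}$. The only (harmless) differences are organizational: you apply BDG directly to the running supremum where the paper uses Doob's maximal inequality followed by BDG at the terminal time, and you do H\"older in $x$ first and the Young split afterwards on scalars, whereas the paper applies Young pointwise inside the spatial integral --- your variant even slightly relaxes the implicit largeness requirement on $m$ needed for the Jensen-in-time step ($m\ge 2(q-1)$ versus the paper's $m\ge 2q$), a restriction both proofs share and leave tacit.
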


\begin{proof}
First, notice that
\begin{align*}
M^N_{t} \doteq \int_{0}^{t} \int_{\mathbb{T}^d}q\left|\rho_s^N\right|^{q-1}   \frac{1}{N} \sum_{i=1}^{N} ( \nabla V^{N})(x-X_{s}^{i}) \, dx\, dW_{s}^{i}
\end{align*}
is a martingale, and $\left|M^N_{t}\right|$ is a positive sub-martingale. So, by Doob's maximal inequality
\begin{align*}
\mathbb{E}\left(\sup_{t\in [0,r]}\left|M^N_{t}\right| \right)^{\frac{m}{q}} \leq \left( \frac{\frac{m}{q}}{\frac{m}{q} - 1} \right)^{\frac{m}{q}}\mathbb{E}\left|M^N_{r}\right|^{\frac{m}{q}}.
\end{align*}
Now, by Burkholder-Davis-Gundy inequality
\begin{align*}\mathbb{E}\left|M^N_{r}\right|^{\frac{m}{q}} &\doteq\mathbb{E}\left|\frac{1}{N} \sum_{i=1}^{N}\int_{0}^{r} \int_{\mathbb{T}^d}q\left|\rho_s^N\right|^{q-1}   ( \nabla V^{N})(x-X_{s}^{i}) \, dx \, dW_{s}^{i} \right|^{\frac{m}{q}}\\
&\lesssim\mathbb{E}\left|\frac{1}{N^{2}} \sum_{i=1}^{N}\int_{0}^{r} \left(\int_{\mathbb{T}^d}q\left|\rho_s^N\right|^{q-1}  ( \nabla V^{N})(x-X_{s}^{i})\, dx\right)^2 ds \right|^{\frac{m}{2q}}\\
&=\mathbb{E}\left|\frac{1}{N} \sum_{i=1}^{N}\int_{0}^{r} \left(\int_{\mathbb{T}^d}q\left|\rho_s^N\right|^{q-1}  \frac{1}{N^{\frac{1}{2}}}( \nabla V^{N})(x-X_{s}^{i})\, dx\right)^2 ds \right|^{\frac{m}{2q}}
\end{align*}
where in the last equality we moved a power of $N$ inside the integral. Now by Young's inequality with $\frac{1}{\frac{q}{q-1}} + \frac{1}{q}$ and Jensen's inequality,
\begin{align*}
E|M^N_{r}|^{\frac{m}{q}}&\lesssim\mathbb{E}\left|\frac{1}{N} \sum_{i=1}^{N}\int_{0}^{r} \left(\int_{\mathbb{T}^d}\left|\rho_s^N\right|^{q}  dx\right)^2 ds \right|^{\frac{m}{2q}}\\
&+\mathbb{E}\left|\frac{1}{N} \sum_{i=1}^{N}\int_{0}^{r} \left(\int_{\mathbb{T}^d}\frac{1}{N^{q/2}}|( \nabla V^{N})(x-X_{s}^{i})|^q  \, dx\right)^2 ds \right|^{\frac{m}{2q}}\\
&\lesssim\mathbb{E}\int_{0}^{r} \|\rho_s^N\|_q^m \, ds \\
&+ \frac{1}{N^{\frac{m}{2}}}\left| \int_{0}^{r} \left(\int_{\mathbb{T}^d}|( \nabla V^{N})(x)|^q \, dx\right)^2 ds \right|^{\frac{m}{2q}}
\end{align*}
where in the last inequality, we have applied a change of variables. Finally, by \eqref{taxa derivada V}
\begin{align*}
E\left|M^N_{r}\right|^{\frac{m}{q}}&\lesssim\mathbb{E}\int_{0}^{r} \left\|\rho_s^N\right\|_q^m ds \\
&+ N^{-\frac{m}{2}} \Big(T N^{\left(2q\beta(1 + \frac{1 }{d})-2\beta\right)}  \, \|\nabla V\|_{q}^{2q}\Big)^{\frac{m}{2q}}\\
&\lesssim\mathbb{E}\int_{0}^{r} \left\|\rho_s^N\right\|_q^m ds \\
&+ N^{\left(-q+2q\beta(1 + \frac{1 }{d})-2\beta\right)\frac{m}{2q}}  \, \|\nabla V\|_{q}^{m}\\
&\lesssim    \int_{0}^{r} \mathbb{E}\left( \sup_{t \in [0,s]}\left\| \rho_t^N\right\|_q \right)^{m} d{s}\nonumber\\
&+ N^{\left(-\frac{1}{2}+\beta(1 + \frac{1 }{d}-\frac{1}{q})\right)\,m}  \, \|\nabla V\|_{q}^{m}.
\end{align*}
\end{proof}

\section{Well-posedness of limiting equation (\ref{SPDE_Ito})} \label{apC}

\medskip

Here, we will prove the well posedness of the limiting equation for the system of particles in a suitable space. We will adapt the strategy of \cite{Hi}.

\begin{proof} [Proof of Theorem \ref{Teo_Krylov}]
The proof is divided into two parts. First, we solve the linearized problem using the $L^q$-Theory of SPDEs. Next, we make a contraction argument to yield the solution of the original equation as a fixed point of the solution map.

Let \[\mathbb{B}\doteq \left\{\rho \in S_{\mathcal{F}^B}^{\infty}\left([0, T]; L^q \left( \mathbb{T}^d\right)\right) \mid \Big\|\left\|\rho\right\|_{T,q}\Big\|_{L^{\infty}(\Omega)} \le \kappa \ell \right\} \]
with metric $d(\rho, \rho^{\prime})\doteq\Big\|\|\rho-\rho^{\prime}\|_{T,q}\Big\|_{L^{\infty}(\Omega)}$,  $\kappa=\|\rho_0\|_q$ and $\ell$ is a positive constant to be determined.

We define the operator $\mathcal{T}: \mathbb{B} \rightarrow S_{\mathcal{F}^B}^{\infty}\left([0, T] ; L^q\left(\mathbb{T}^d\right)\right)$ as follows: for each $\xi \in \mathbb{B}$, let $\mathcal{T}(\xi):=\rho^{\xi}$ be the solution to the following linear SPDE:
\begin{numcases}
\mathrm{d}\rho_t = \frac{1}{2} \sum_{i, j=1}^d \partial_{i j} \left(\rho_t\sum_{k=1}^{d}\left(\nu_t^{ik}\nu_t^{jk} + \sigma_t^{i k} \sigma_t^{j k}\right)\right)dt - \nabla \cdot \big(\rho_t F(\cdot\,, K\ast \xi_t)\big) \, dt \nonumber\\ \hspace{28px} - \nabla \rho_t \cdot \sigma_t\, d  B_t \label{lin_SPDE} \\
\rho_0 \text { is given. }  \nonumber
\end{numcases}

Now by Assumption $(\boldsymbol{A}^c)$, we rewrite $(\ref{lin_SPDE})$ as follows:
\begin{numcases}
\mathrm{d}\rho_t = \frac{1}{2} \sum_{i, j=1}^d \partial_{i j} \rho_t\sum_{k=1}^{d}\left(\nu_t^{ik}\nu_t^{jk} + \sigma_t^{i k} \sigma_t^{j k}\right)dt + f_t(\rho) \, dt \nonumber\\ \hspace{28px} - \nabla \rho_t \cdot \sigma_t\, d  B_t \label{lin_SPDE 1} \\
\rho_0 \text { is given. }  \nonumber
\end{numcases}
where
\begin{align*} f_t(\rho) &\doteq -\nabla \cdot (\rho_tF(\cdot,K\ast \xi_t)) +\sum_{i,j=1}^d \, \partial_j \rho_t \, \partial_i \Big((\nu\nu^{\top})^{ij}_t + (\sigma\sigma^{\top})^{ij}_t\Big) \\
&+ \frac{1}{2} \, \rho_t \sum_{i,j=1}^d \partial_{ij} \Big((\nu\nu^{\top})^{ij}_t + (\sigma\sigma^{\top})^{ij}_t\Big) \\
&= -\nabla \cdot (\rho_tF(\cdot,K\ast \xi_t)) + 2 \sum_{j=1}^d \partial_j \rho_t \sum_{i=1}^d \partial_i a_t^{ij} + \rho_t \sum_{i,j=1}^d \partial_{ij} a_t^{ij}
\end{align*}	
and 
\begin{align*} a^{ij}_t \doteq \frac{1}{2}\sum_{k=1}^d \Big(\nu^{ik}_t \nu^{jk}_t + \sigma^{ik}_t\sigma^{jk}_t\Big) \label{aij}.
\end{align*}

We now proceed to verify the conditions 5.1–6 of Theorem 5.1 in \cite{kry}. For the reader's convenience, we restate these conditions below using our notations:

\begin{itemize}
\item[5.1] There exist $\Lambda, \lambda>0$ such that
\[  \Lambda |\xi|^2 \geq \sum_{i,j,k=1}^d \nu_t^{ik}(x) \nu_t^{jk}(x)\, \xi^i\xi^j \ge \lambda \, |\xi|^2, \]
for any $t\geq0$, $x \in \mathbb{T}^d$, $\xi \in \mathbb{R}^d$.

\item[5.2] For any $i,j\in \{1,...,d\}$ and $\epsilon >0$, there exists $\delta > 0$, such that
\[|a^{ij}_t(x) - a^{ij}_t(y)| + |\sigma^{i}_t(x) - \sigma^{i}_t(y)| < \epsilon \]
whenever $|x-y| < \delta$, $t\geq0$.

\item[5.3] For any  $i,j \in \{1,...,d\}$ and $t\geq0$, the functions $a^{ij}_t$ and $\sigma^i_t$ are continuously differentiable.

\item[5.4] For any $u \in H^{1}_q$, $f_t(u)$ is predictable as a function taking values in $H^{-1}_q$.

\item[5.5] For any $i,j \in \{1,...,d\}$ and $t\geq0$,
\[  \|a^{ij}_t\|_{C^1} +   \|\sigma^{i}_t\|_{\infty} \leq M  . \]

\item[5.6] For any $\epsilon > 0$, there exists $C_{\epsilon} > 0$, such that, for any $t\geq0$ and $u_t, v_t \in H^1_q$, we have

\[\left\|f_t(u) - f_t(v)\right\|_{-1,q} \leq \epsilon \left\|u_t - v_t\right\|_{1,q} + C_{\epsilon}\left\|u_t - v_t\right\|_{-1,q}.\] 
\end{itemize}

Now note that the condition 5.1 is exactly $(\mathbf{A}^c_{iii})$.
The conditions 5.2, 5.3 and 5.5 are verified by assumption $(\mathbf{A}^c_i)$.  
For condition 5.4, 
note that $f_t(u)$ is a composition of predictable functions. 

For the remaining one, 5.6, we notice that if $F$ is given by $(\mathbf{A}^F$) (thus bounded) and $K$ by $(\mathbf{A}^K)$, we have for all $u_t, v_t \in {H}^1_q $,
\begin{align*}
\left\|\nabla \cdot( F(\cdot,  K\ast \xi_t)(u_t - v_t) \right\|_{-1,q} &=
\left\|(I-\Delta)^{-\frac{1}{2}} \nabla \cdot( F(\cdot,  K\ast \xi_t)(u_t - v_t) )\right\|_{q} \\ & \le C\left\|F(\cdot,K\ast \xi_t) (u_t - v_t)\right\|_{q} \\ 
& \le C\left\|u_t - v_t\right\|_{q}\\ 
& \le C\left\|u_t - v_t\right\|_{1,q}^{\frac{1}{2}}\left\|u_t - v_t\right\|_{-1,q}^{\frac{1}{2}}\\ 
& \le \epsilon\left\|u_t - v_t\right\|_{1,q} + \epsilon^{-1} C^2 \left\|u_t - v_t\right\|_{-1,q},
\end{align*}
by the interpolation inequality. On the other hand, if we assume $(\mathbf{A}^I)$ and $(\mathbf{A}^K)$ we have for all $u_t, v_t \in {H}^1_q $,
\begin{align*}
\left\|\nabla \cdot\big ( F(\cdot,  K\ast \xi_t)(u_t - v_t) \big)\right\|_{-1,q} &=
\left\|(I-\Delta)^{-\frac{1}{2}} \nabla \cdot\big(K\ast \xi_t)(u_t - v_t)\big)\right\|_{q} \\ & \le C\left\|(K\ast \xi_t) (u_t - v_t)\right\|_{q} \\ 
& \le C C_K \left\|\xi_t\right\|_{q}\left\|u_t - v_t\right\|_{q}\\ 
& \le C C_K \kappa l\left\|u_t - v_t\right\|_{0,q}\\ 
& \le c \left\|u_t - v_t\right\|_{1,q}^{\frac{1}{2}}\left\|u_t - v_t\right\|_{-1,q}^{\frac{1}{2}}\\ 
& \le \epsilon\left\|u_t - v_t\right\|_{1,q} +  \epsilon^{-1}c^2\left\|u_t - v_t\right\|_{-1,q},
\end{align*}
by the interpolation inequality and using $ \xi \in \mathbb{B}$. Recall that $\|K\ast f\|_{\infty} \le C_K \|f \|_{q}$, for $q$ given by $(\textbf{A}^K)$.

For the remaining terms in $f_t(\rho)$, we have 
\begin{align*}
\| (u_t-v_t) \, \partial_{ij} a_t^{ij} \|_{-1,q} \le \| (u_t-v_t) \, \partial_{ij} a_t^{ij} \|_{q} \le \|a_t^{ij}\|_{C^2} \| u_t-v_t\|_{q}
\end{align*}
and then it follows by interpolation inequality as before, while for the remaining one, we use the dual space $(H^{-1}_q)^*=H^1_{q^\prime}$ with the duality pair given by $(\cdot, \cdot)_{-1,1}$. Indeed, by applying the Leibniz's rule, we have	
\begin{align*}
&\big \| \, \partial_j (u_t-v_t) \, \partial_i a_t^{ij} \big \|_{-1,q} \\ &= \sup_{\|\phi\|_{1,q^\prime}=1} \big( \partial_j (u_t-v_t)\,  \partial_i a_t^{ij} ,\phi\big)_{-1,1} \\
&=  \sup_{\|\phi\|_{1,q^\prime}=1} \big(   (u_t-v_t)\,  \partial_i a_t^{ij}, - \partial_j \phi \big)_{-1,1}  + \sup_{\|\phi\|_{1,q^\prime}=1} \big(   (u_t-v_t)\,  \partial_{ji} a_t^{ij} ,- \phi\big)_{-1,1} \\
&\le  \sup_{\|\phi\|_{1,q^\prime}=1} \left( \|\partial_j \phi\|_{q^\prime} \, \|(u_t-v_t)\,  \partial_i a_t^{ij}\|_q  + \|\phi\|_{1,q'}\| (u_t-v_t) \, \partial_{ij} a_t^{ij} \|_{q} \right) \\
&\le 2 \, \|a_t^{ij}\|_{C^2} \| u_t-v_t\|_{q}
\end{align*}
\noindent where the H\"older's inequality was applied in the second-to-last inequality.  Then again, interpolation inequality yields the estimate that verifies assumption 5.6.

Notice that the initial condition $\rho_0 \in H^{1-\frac{2}{q}}_q\left(\mathbb{T}^d\right)$ is deterministic, therefore, we have by Theorem 5.1 in \cite{kry} that the linear SPDE (\ref{lin_SPDE}) admits a unique solution $\rho^{\xi} \in L_{\mathcal{F}^B}^q\left([0,T];H^{1}_q\left(\mathbb{T}^d\right)\right)$ (see  Definition 3.1 in \cite{kry}). In addition, since Theorem 7.1 (iii) in \cite{kry} holds for $q\geq 2$ (by Theorem 4.2 in \cite{kry}) and $\|\rho_t\|_{1}=\|\rho_0\|_{1}=1$, a.s., we have  $ \rho^{\xi} \in S_{\mathcal{F}^B}^{q}\left([0, T] ; L^1 \cap L^q\left(\mathbb{T}^d\right)\right)$. Moreover, by the maximum principle (Theorem 5.12 in \cite{kry}) $\rho^{\xi}(t,\cdot) \geq 0$, a.s..
\smallskip

We now check that $\rho^{\xi}$ is also in $S_{\mathcal{F}^B}^{\infty}\left([0, T] ; L^1 \cap L^q\left(\mathbb{T}^d\right)\right)$, and we drop the superscript $\xi$ to ease the computations. For the following, if not mentioned, we are assuming $(\mathbf{A}^F)$ and $(\mathbf{A}^K)$, and we make observations about the other sets of assumptions. Applying the It{\^o}'s formula for the $L^q$-norm of a $H_q^{1}$-valued process in \cite{kry},
\begin{align}
&\| \rho_t \|_{q}^q = \|\rho_0\|_{q}^q \nonumber\\&\quad+ \frac{1}{2}\int_{\mathbb{T}^d} \int_0^t q | \rho_s (x) |^{q - 2} \rho_s (x)
\big(\nu_s(x)\nu_s^{\top}(x)+\sigma_s(x)\sigma_s^{\top}(x) \big)\boldsymbol{\cdot} D^2\rho_s (x) \, ds \, dx\nonumber\\
&\quad - \int_{\mathbb{T}^d} \int_0^t q | \rho_s (x) |^{q - 2} \rho_s (x)\, f_s(\rho)(x) \, ds \, dx\nonumber\\
&\quad - \int_{\mathbb{T}^d} \int_0^t q | \rho_s (x) |^{q - 2} \rho_s (x) \nabla
\rho_s (x) \cdot \sigma_s(x) \, d  B_s \, d  x\nonumber\\
&\quad + \frac{1}{2}  \int_{\mathbb{T}^d} \int_0^t q (q - 1) | \rho_s
(x) |^{q - 2}  | \sigma^{\top}_s(x) \, \nabla \rho_s (x) |^2 \, ds \, dx \nonumber\\
&= \text{(I) + (II) + (III) + (IV)}. \label{ito rho}
\end{align}

Above, $A\boldsymbol{\cdot}B \doteq \sum_{i,j} A_{ij}B_{ij}$. Using stochastic Fubini and integration by parts, we handle each term separately. Notice that under assumption $(\mathbf{A}^c_{ii})$, i.e., $\sum_i \partial_i \sigma_t^{ik} =0$ we have
\begin{align*}
\text{(III)} &= - \int_0^t \int_{\mathbb{T}^d}  \sum_{ik} q | \rho_s (x) |^{q - 2} \rho_s (x) \, \partial_i
\rho_s (x)\sigma^{ik}_s(x) \,d  x\,  d  B^k_s \,  \\
&= - \int_0^t \int_{\mathbb{T}^d}  \sum_{ik} \partial_i (| \rho_s (x) |^{q}) \,   \sigma^{ik}_s(x) \,dx \, d  B^k_s \\
&= \int_0^t \int_{\mathbb{T}^d}   | \rho_s (x) |^{q} \,  \sum_{ik} \partial_i\sigma^{ik}_s(x) \, dx \, d  B^k_s  = 0.\\
\end{align*}
For (II), we have two parts
\begin{align*}
\text{(II)} &= - \int_0^t \int_{\mathbb{T}^d}   q | \rho_s (x) |^{q - 2} \rho_s (x) \nabla
\cdot [\rho_s (x) F(x,K\ast \xi_s)] \, dx \, ds \\
& - \int_0^t \int_{\mathbb{T}^d}   q | \rho_s (x) |^{q - 2} \rho_s (x) \left(2 \sum_{ij} \partial_{j} \rho_s \, \partial_i a^{ij}_s + \rho_s \sum_{ij} \partial_{ij} a^{ij}_s  \right)  \, dx \, ds
\end{align*}
For the first one, after integration by parts, we use $\epsilon$-Young inequality with $\frac{1}{2}+\frac{1}{2}=1$
\begin{align}
&= q (q - 1)   \int_0^t \int_{\mathbb{T}^d} |\rho_s (x) |^{q - 2}  \rho_s(x) \nabla \rho_s(x) \cdot F(x,K\ast \xi_s )\, dx \, ds \nonumber \\
&\le q(q-1)L   \int_0^t \int_{\mathbb{T}^d} |\rho_s (x) |^{q - 2} \rho_s(x) |\nabla \rho_s(x)| \, dx \, ds \label{pgradp}\\
&\le q(q-1)L \, \int_0^t \int_{\Rd}  |\rho_s(x)|^{q-2} \big(C_\epsilon \, \rho_s(x)^2 + \epsilon\, |\nabla \rho_s(x)|^2\big) \, dx \, ds \nonumber \\
&= q(q-1)LC_\epsilon  \int_0^t \|\rho_s\|_{q}^{q} \, ds + \epsilon \, q(q-1)L \int_0^t \int_{\Rd} |\rho_s|^{q-2}|\nabla \rho_s|^2 \, dx \, ds \nonumber
\end{align}

\noindent Above, we used $(\mathbf{A}^F)$ so that $F$ is bounded by $L$. In fact, if one assumes $(\mathbf{A}^I)$ and $(\mathbf{A}^K)$, $(\ref{pgradp})$ implies that the first term in $\text{(II)}$ reads
\begin{align*}
\leq C_\epsilon q(q-1) C_K^2\kappa^2 \ell^2 \int_0^t \|\rho_s\|_{q}^{q} \, ds + \epsilon \, q(q-1) \int_0^t \int_{\Rd} |\rho_s|^{q-2}|\nabla \rho_s|^2 \, dx \, ds,
\end{align*}
since $\|K \ast \xi_s\|_{L^\infty} \le C_K \|\xi_s\|_{q} \le C_K\kappa\ell$. For the second $\text{(II)}$,
\begin{align*}
& - \int_0^t \int_{\mathbb{T}^d}   q | \rho_s (x) |^{q - 2} \rho_s (x) \Big(2 \sum_{ij} \partial_{j} \rho_s \, \partial_i a^{ij}_s + \rho_s \sum_{ij} \partial_{ij} a^{ij}_s  \Big)  \, dx \, ds \\
& \le \int_0^t \int_{\mathbb{T}^d}  2q | \rho_s|^{q-2} \rho_s \Big| \sum_{ij}( \partial_{j} \rho_s \, \partial_i a^{ij}_s) \Big| \, dx \, ds + q  \int_0^t  \sum_{ij} \|a_s^{ij}\|_{C^2}\, \| \rho_s\|_q^{q} \, ds \\
& \le \int_0^t \int_{\mathbb{T}^d}  2q | \rho_s |^{q-2}\rho_s \, |\nabla \rho_s| \,\Big(\sum_j \big(\sum_i \partial_i a^{ij}_s \big)^2\Big)^{\frac{1}{2}}  \, dx \, ds + qM \int_0^t  \| \rho_s \|_q^{q} \, ds
\end{align*}
\noindent where we used Cauchy-Schwarz inequality. In order to apply the Gr{\"o}nwall's lemma, the second term above is already suitable, while the first one, after bounding the $a^{ij}$ term with its $L^\infty$ norm provided in $(\textbf{A}^c)$, is a multiple of \eqref{pgradp}, from which we get an analogous estimate. In fact, due to assumption $(\mathbf{A}^c_i)$, we have
\begin{align*}
	&\Big|\sum_{ik} \partial_i\nu^{ik}_s \nu^{jk}_s + \nu^{ik}_s \partial_i\nu^{jk}_s\Big|
	\\ &\le \sum_{ik} \Big( \|\partial_i\nu^{ik}_s\|_{\infty}  + \|\nu^{ik}_s\|_{\infty} \Big) \Big( \|\nu^{jk}_s\|_{\infty} + \| \nabla\nu^{jk}_s\|_{\infty} \Big) \le 2M^2.\end{align*}
\noindent and analogously for $\sigma$. Thus,
\[  \text{(II)} \le C \int_0^t \|\rho_s\|_q^q \, ds + \epsilon \, (q(q-1)L+q\sqrt{d}\, 4M^2) \int_0^t \int_{\Rd} |\rho_s|^{q-2} |\nabla \rho_s|^2 \, dx\, ds. \]

Also, if $(\mathbf{A}^I)$ and $(\mathbf{A}^K)$ are in place
\begin{align*}  \text{(II)} &\le C_\epsilon\left(  q(q-1) C_K^2\kappa^2 \ell^2 + q\sqrt{d}\, 4M^2 + qM \right)\int_0^t \|\rho_s\|_q^q \, ds\\
&+ \epsilon \, (q(q-1)+q\sqrt{d}\, 4M^2) \int_0^t \int_{\Rd} |\rho_s|^{q-2} |\nabla \rho_s|^2 \, dx\, ds. \end{align*}

Now for (I), again by integration by parts,
\begin{align*}
\text{(I)} &\doteq\frac{1}{2}  \int_0^t \int_{\mathbb{T}^d} \sum_{ijk} q | \rho_s|^{q - 2} \rho_s \,(  \nu^{ik}_s\nu^{jk}_s+ \sigma^{ik}_s\sigma^{jk}_s)\,   \partial_{ij} \rho_s   \, dx \, ds\\
&=-\frac{1}{2} \int_0^t \int_{\mathbb{T}^d} \sum_{ijk} q(q-1) | \rho_s   |^{q - 2} \partial_i\rho_s   \, (  \nu^{ik}_s\nu^{jk}_s+ \sigma^{ik}_s\sigma^{jk}_s) \, \partial_{j} \rho_s   \, dx \, ds\\
&\quad -\frac{1}{2} \int_0^t \int_{\mathbb{T}^d} \sum_{ijk} q | \rho_s   |^{q - 2} \rho_s\,    \partial_i\Big(  \nu^{ik}_s\nu^{jk}_s+ \sigma^{ik}_s\sigma^{jk}_s\Big) \partial_{j} \rho_s   \, dx \, ds.
\end{align*}
Notice that $\sum_{ijk} \nu^{ik}_s \nu^{jk}_s \, \partial_i \rho_s \, \partial_j \rho_s \ge \lambda \, |\nabla \rho_s|^2$, by assumption $(\textbf{A}^c_{iii})$. So by multiplying this inequality by $-1$ and taking into account that
\begin{align*}
\sum_{k}    \Big|\sum_{i} \partial_i\rho_s\sigma^{ik}_s\Big|^2 = \sum_{ijk}   \partial_i\rho_s   \, ( \sigma^{ik}_s\sigma^{jk}_s) \, \partial_{j} \rho_s,
\end{align*}
we get 
\begin{align*}
\text{(I)}&\le -\frac{1}{2} \int_0^t \int_{\mathbb{T}^d}  q(q-1) | \rho_s   |^{q - 2} \, \lambda \, |\nabla \rho_s|^2 \, dx \, ds \\
&\quad  -\frac{1}{2} \int_0^t \int_{\mathbb{T}^d}  \sum_{k} q(q-1) | \rho_s   |^{q - 2}  \Big|\sum_{i} \partial_i\rho_s\sigma^{ik}_s\Big|^2 \, dx \, ds\\
&\quad -\frac{1}{2} \int_0^t \int_{\mathbb{T}^d} q | \rho_s   |^{q - 2} \rho_s\, \sum_j \partial_{j} \rho_s \,  \partial_i\Big(\sum_{ik} \nu^{ik}_s \nu^{jk}_s+ \sigma^{ik}_s \sigma^{jk}_s \Big)    \, dx \, ds\\
&\le -\frac{1}{2} \int_0^t \int_{\mathbb{T}^d} q(q-1) | \rho_s   |^{q - 2} \Big(\lambda \, | \nabla \rho_s|^2+|\sigma^T_s \nabla \rho_s|^2\Big)\, dx \, ds\\
&\quad +\frac{1}{2} \int_0^t \int_{\mathbb{T}^d} q | \rho_s   |^{q - 1}\, |\nabla \rho_s| \Big(\sum_j \Big(\partial_i\sum_{ik} (\nu^{ik}_s \nu^{jk}_s+ \sigma^{ik}_s \sigma^{jk}_s )\Big)^2\Big)^{\frac{1}{2}} dx \, ds
\end{align*}
\noindent where the last inequality follows by Cauchy-Schwarz inequality. Notice that the second-to-last line contains -(IV). Recall 
\begin{align*}
&\Big(\sum_j \Big(\partial_i\sum_{ik} (\nu^{ik}_s \nu^{jk}_s+ \sigma^{ik}_s \sigma^{jk}_s )\Big)^2\Big)^{\frac{1}{2}} \le \Big( \sum_j (4M^2)^2 \Big)^{\frac{1}{2}} = \sqrt{d} \, 4M^2.\end{align*}
Thus, we have
\begin{align*}
\text{(I)} &\le -\text{(IV)} -\frac{\lambda}{2} \int_0^t \int_{\mathbb{T}^d} q(q-1) | \rho_s   |^{q - 2}\, |\nabla \rho_s|^2 \,  dx \, ds    \\
&\quad +\frac{1}{2} \int_0^t \int_{\mathbb{T}^d} q | \rho_s   |^{q - 1}\, |\nabla \rho_s| \, 4M^2 \sqrt{d}\,  dx \, ds   \\
&\le -\text{(IV)} -\frac{\lambda}{2} \int_0^t \int_{\mathbb{T}^d} q(q-1) | \rho_s   |^{q - 2}\, |\nabla \rho_s|^2 \,  dx \, ds    \\
&\quad + C_\epsilon \int_0^t \|\rho_s\|_{q}^{q} \, ds + \epsilon  \, q 4 M^4 d  \int_0^t \int_{\Rd}|\rho_s|^{q-2}|\nabla \rho_s|^2 \, dx \, ds
\end{align*}
\noindent by $\epsilon$-Young inequality. Finally,
\begin{align*}
\| \rho_t \|_{q}^q &=  \|\rho_0\|_{q}^q + \text{(I) + (II) + (III) + (IV)} \\
&\le \|\rho_0\|^q_{q} + C \int_0^t \|\rho_s\|_{q}^{q} \, ds \\
&\quad + \Big(\epsilon L\, q(q-1) + \epsilon q4M^4d  -\frac{\lambda}{2} q(q-1)\Big) \int_0^t \int_{\Rd} |\rho_s|^{q-2}|\nabla \rho_s|^2 \, dx \, ds.
\end{align*}
Again, if $(\mathbf{A}^I)$ and $(\mathbf{A}^K)$ are in place
\begin{align*}
	\| \rho_t \|_{q}^q &=  \|\rho_0\|_{q}^q + \text{(I) + (II) + (III) + (IV)} \\
	&\le \|\rho_0\|^q_{q} + C_\epsilon\left(  q(q-1) C_K^2\kappa^2 \ell^2 + q\sqrt{d}4M^2 + qM + 1 \right) \int_0^t \|\rho_s\|_{q}^{q} \, ds \\
	&\quad + \Big(\epsilon q(q-1) + \epsilon q4M^4d  -\frac{\lambda}{2} q(q-1)\Big) \int_0^t \int_{\Rd} |\rho_s|^{q-2}|\nabla \rho_s|^2 \, dx \, ds.
\end{align*}

\noindent Finally, under the assumption \((\mathbf{A}^F)\), by taking \(\epsilon\) sufficiently small and \(\ell\doteq e^{\frac{CT}{q}}\), an application of Grönwall's Lemma yields
\[ \sup_{t \in [0,T]} \norm{ \rho_t}_{q} \le \norm{ \rho_0 }_{q}  \big(e^{C T}\big)^{\frac{1}{q}} \le \kappa  \ell  \,\, \text{ a.s.}\]
which means $\rho^{\xi} \in \mathbb{B}$.

Now, under the assumptions \((\mathbf{A}^I)\) and \((\mathbf{A}^K)\), and by taking $T$ and \(\epsilon\) sufficiently small and \(\ell > 1\), an application of Grönwall's Lemma yields
\[ \sup_{t \in [0,T]} \norm{ \rho_t}_{q} \le \norm{ \rho_0 }_{q}  \big(e^{C T}\big)^{\frac{1}{q}} \le \kappa  \ell  \,\, \text{ a.s.}\]
where 
\[C \doteq C_\epsilon\left(  q(q-1) C_K^2\kappa^2 \ell^2 + q\sqrt{d}4M^2 + qM + 1 \right). \]
This means $\rho^{\xi} \in \mathbb{B}$.
\\
We now show that the map $\mathcal{T}$ is a contraction.

For any $\bar{\xi}, \xi \in \mathbb{B}$, set $\delta \rho=\rho^{\bar{\xi}}-\rho^{\xi}$ and $\delta \xi=\bar{\xi}-\xi$. As before, we apply Itô's formula for the $L^q$-norm of a $H^{1,q}$-valued process in \cite{kry} to $\delta \rho$ and obtain
\begin{align}
&\| \delta \rho_t \|_{q}^q \nonumber\\ &\quad = \frac{1}{2}\int_{\mathbb{T}^d} \int_0^t q | \delta \rho_s (x) |^{q - 2} \delta \rho_s (x)
(\nu_s(x)\nu_s^{\top}(x)+\sigma_s(x)\sigma_s^{\top}(x) )\boldsymbol{\cdot} D^2\delta \rho_s (x) \, ds \, dx\nonumber\\
&\quad - \int_{\mathbb{T}^d} \int_0^t q |  \delta \rho_s (x) |^{q - 2} \delta \rho_s \, [f_{\bar{\xi}}(\rho^{\bar{\xi}},s,x) - f_{\xi}(\rho^{\xi},s,x)]\, ds \, dx\nonumber\\
&\quad - \int_{\mathbb{T}^d} \int_0^t q | \delta \rho_s (x) |^{q - 2} \delta \rho_s (x) \nabla
\delta \rho_s (x) \cdot \sigma_s(x) \, d  B_s \, d  x\nonumber\\
&\quad + \frac{1}{2}  \int_{\mathbb{T}^d} \int_0^t q (q - 1) | \delta \rho_s
(x) |^{q - 2}  | \sigma^{\top}_s(x) \, \nabla \delta \rho_s (x) |^2 \, ds \, dx \nonumber\\
&= \text{(I) + (II) + (III) + (IV)}, \label{ito difference q}
\end{align}

\noindent where the numbered terms are analogous to before with $\delta \rho$ in place of $\rho$, except (II), which has a nonlinear part
\begin{align*}
\text{(II)} &\doteq - \int_{\mathbb{T}^d} \int_0^t q |  \delta \rho_s (x) |^{q - 2} \delta \rho_s \nabla
\cdot [ \rho^{\bar{\xi}}_s (x) F(x,K\ast \bar{\xi}_s)] \, ds \, dx\\
&\quad + \int_{\mathbb{T}^d} \int_0^t q |  \delta \rho_s (x) |^{q - 2} \delta \rho_s \nabla
\cdot [ \rho^{\xi}_s (x) F(x, K\ast \xi_s)] \, ds \, dx\\
&\quad - \int_0^t \int_{\mathbb{T}^d}   q |\delta \rho_s (x) |^{q - 2} \delta \rho_s (x) \Big(2 \sum_{ij} \partial_{j} \delta \rho_s \, \partial_i a^{ij}_s + \delta \rho_s \sum_{ij} \partial_{ij} a^{ij}_s  \Big)  \, dx \, ds
\end{align*}
The above linear part has estimates on $\delta \rho$ following from the same reasoning as before, while for the nonlinear (let us denote it by (II)$^\prime$) we subtract and add $\rho_s^{\xi}(x)$ to get
\begin{align}
\text{(II)}^\prime &= - \int_0^t\int_{\mathbb{T}^d} q |  \delta \rho_s  |^{q - 2} \delta \rho_s \nabla
\cdot [ \delta \rho_s (x) F(x, K\ast \bar{\xi}_s)] \, dx \, ds \nonumber \\
&\quad - \int_0^t \int_{\mathbb{T}^d}  q |  \delta \rho_s|^{q - 2} \delta \rho_s \nabla
\cdot [ \rho^{\xi}_s (x) \, F(x,K \ast  \bar{\xi}_s)] \, dx \, ds \nonumber \\
&\quad + \int_{\mathbb{T}^d} \int_0^t q |  \delta \rho_s|^{q - 2} \delta \rho_s \nabla
\cdot [ \rho^{\xi}_s (x) F(x, K\ast \xi_s )] \, ds \, dx \nonumber 
\end{align}
So integration by parts gives
\begin{align}
\text{(II)}^\prime &\le q(q-1) \int_0^t  \int_{\mathbb{T}^d} |\delta \rho_s  |^{q - 2} \Big( |\nabla \delta \rho_s| |\delta \rho_s | L + |\nabla \delta \rho_s| \,|\rho_s^\xi|\, L\,|K \ast \delta \xi_s| \Big) \, dx \, ds \nonumber \\
&\le q(q-1)L \int_0^t  \int_{\mathbb{T}^d} |\delta \rho_s  |^{q - 2} \Big( |\nabla \delta \rho_s| |\delta \rho_s |  + |\nabla \delta \rho_s| \,|\rho_s^\xi|\, \|\delta \xi_s\|_{q} \Big) \, dx \, ds \label{II_contraction}
\end{align}
\noindent Above, we also used $(\mathbf{A}^K)$ and that $F$ is bounded and Lipschitz $(\mathbf{A}^F)$. In fact, one can show similar bounds assuming $(\mathbf{A}^K)$ and  $(\mathbf{A}^I)$, noticing that $F(x,u)=u$ is also Lipschitz, and the boundedness of $F$ being replaced by that of $K\ast \bar{\xi}_s$, since $\|K \ast \bar{\xi}_s\|_{\infty} \le C_K \|\bar{\xi}_s\|_{q}\leq C_K \kappa \ell$.

Next, by $\epsilon$-Young inequality, we have
\[|\delta \rho_s  |^{q - 1} |\nabla \delta \rho_s| = |\delta \rho_s  |^{q - 2}|\delta \rho_s| |\nabla \delta \rho_s| \le C_\epsilon |\delta  \rho_s  |^{q} + \epsilon |\delta \rho_s  |^{q-2}|\nabla \delta \rho_s|^2 \]

and also
\begin{align*}
&\big \||\delta \rho_s  |^{q - 2} |\nabla \delta \rho_s| \, \rho_s^\xi \, \|\delta \xi_s\|_q \, \big \|_1 \\ 
&\le \big \||\delta \rho_s  |^{\frac{q - 2}{2}} |\nabla \delta \rho_s| \big \|_2 \, \big \||\delta \rho_s  |^{\frac{q - 2}{2}} \rho_s^\xi \, \|\delta \xi_s\|_q\, \big \|_2 \\
&\le \epsilon \, \big \||\delta \rho_s  |^{\frac{q - 2}{2}} |\nabla \delta \rho_s| \big\|^2_2 \, + \, C_\epsilon \, \big \||\delta \rho_s  |^{\frac{q - 2}{2}} \rho_s^\xi \, \|\delta \xi_s\|_q \big \|^2_2
\end{align*}

Again, by Young's inequality, we have
\begin{align*}
\big\||\delta \rho_s  |^{\frac{q - 2}{2}} \rho_s^\xi \, \|\delta \xi_s\|_q \, \big\|^2_2 &= \big \| |\delta \rho_s  |^{q - 2} (\rho_s^\xi)^2 \, \|\delta \xi_s\|_q^2 \, \big \|_1 \\
&\le \, \big \|\delta \rho_s \big \|_q^{q} \, + \,  \, \big \|\rho_s^\xi \, \|\delta \xi_s\|_q  \big \|_q^q \\
&= \, \|\delta \rho_s \|_q^{q} \, + \,  \, \|\rho_s^\xi  \|_q^q \, \|\delta \xi_s\|^q_{q}
\end{align*}

Using the above in (\ref{II_contraction}), we get
\begin{align*}
\text{(II)}^\prime &\le 2C_\epsilon q(q-1)L \int_0^t \|\delta \rho_s\|_{q}^q \, ds + 2\epsilon \, q (q-1) L \int_0^t \int_{\Rd} |\delta \rho_s|^{q-2}|\nabla \delta \rho_s|^2 \, dx \, ds \\
& \quad + C_\epsilon q(q-1)L \int_0^t  \|\rho_s^\xi \|^q_{q} \, \|\delta \xi_s\|_{q} ^q \, ds\\
&\le 2C_\epsilon q(q-1)L \int_0^t \|\delta \rho_s\|_{q}^q \, ds + 2\epsilon \, q (q-1)L \int_0^t \int_{\Rd} |\delta \rho_s|^{q-2}|\nabla \delta \rho_s|^2 \, dx \, ds \\
& \quad + C_\epsilon q(q-1)L \,  \, \Big\|\|\rho^{\xi}\|_{T,q}\Big\|_{\infty}^q  \int_0^t   \|\delta \xi_s\|^q_{q} \, ds 
\end{align*}

\medskip
Finally, similarly to before, by choosing $\epsilon >0$
sufficiently small, we obtain 
\begin{align*}
\| \delta \rho_t \|_{q}^q &=  \text{(I) + (II) + (III) + (IV)} \\
&\le C \int_0^t \|\delta \rho_s\|_{q}^{q}\, ds + C \, (\kappa \ell)^q \int_0^t \|\delta \xi_s\|^q_{q} \, ds 
\end{align*}

Thus, an application of Gr\"onwall's lemma yields
\[ \Big\|\|\delta \rho\|_{T,q}\Big\|_{\infty} \le  e^{\frac{CT}{q}} (C)^{\frac{1}{q}} (\kappa \ell) T^\frac{1}{q}\Big\|\|\delta \xi\|_{T,q}\Big\|_{\infty}  
\] 
and taking $T$ small, depending on all given data $\kappa,\lambda, C_K,q,L,M$ and $d$, we obtain that $\mathcal{T}$ is a contraction. 

\bigskip

\end{proof}

\section*{Acknowledgements}

C. Olivera  is partially supported by  FAPESP-ANR by the  grant  $2022/03379-0$ ,  by FAPESP by the grant  $2020/04426-6$, and  CNPq by the grant $422145/2023-8$. A. B. de Souza is partially supported by  Coordenação de Aperfeiçoamento de Pessoal de Nível Superior – Brasil (CAPES) – Finance Code $001$. J. Knorst  is partially supported by  FAPESP by the  grants  2022/13413-0,\\  BEPE 2023/14629-0.

\end{document}